  \theoremstyle{plain}
    \newtheorem{thm}{Theorem}[section]
    \newtheorem{prop}[thm]{Proposition}
   \newtheorem{lemma}[thm]{Lemma}
    \newtheorem{subsec}[thm]{}
\theoremstyle{definition}
    \newtheorem{defn}[thm]{Definition}
        \newtheorem{remark}[thm]{Remark}
    \newtheorem{exam}[thm]{Example}
\theoremstyle{remark}
\renewcommand{\Im}{\operatorname{Im}}
\newcommand{\Der}{\operatorname{Der}}
\newcommand{\Hom}{\operatorname{Hom}}
\newcommand{\Aut}{\operatorname{Aut}}
\newcommand{\Ker}{\operatorname{Ker}}
\newcommand{\Lie}{\operatorname{Lie}}
\newcommand{\ALie}{\operatorname{ALie}}
\newcommand{\Leib}{\operatorname{Leib}}
\newcommand{\ab}{\operatorname{ab}}
\newcommand{\nab}{\operatorname{nab}}
\newcommand{\End}{\operatorname{End}}
\newcommand{\Ext}{\operatorname{Ext}}
\newcommand{\Id}{\operatorname{Id}}
\title{}
\author{}
\date{}
\begin{document}

\title{$2$-term averaging $L_\infty$-algebras and non-abelian extensions of averaging Lie algebras}

\author{Apurba Das}
\address{Department of Mathematics,
Indian Institute of Technology, Kharagpur 721302, West Bengal, India.}
\email{apurbadas348@gmail.com, apurbadas348@maths.iitkgp.ac.in}

\author{Sourav Sen}
\address{Tata Institute of Fundamental Research, Mumbai 400005, India.}
\email{sourav.sen3@gmail.com}

\begin{abstract}
In recent years, averaging operators on Lie algebras (also called embedding tensors in the physics literature) and associated tensor hierarchies form an efficient tool for constructing supergravity and higher gauge theories. A Lie algebra with an averaging operator is called an averaging Lie algebra. In the present paper, we introduce $2$-term averaging $L_\infty$-algebras and give characterizations of some particular classes of such homotopy algebras. Next, we study non-abelian extensions of an averaging Lie algebra by another averaging Lie algebra. We define the second non-abelian cohomology group to classify the equivalence classes of such non-abelian extensions. Next, given a non-abelian extension of averaging Lie algebras, we show that the obstruction for a pair of averaging Lie algebra automorphisms to be inducible can be seen as the image of a suitable Wells map. Finally, we discuss the Wells short exact sequence in the above context.
\end{abstract}

\maketitle



\medskip

 {2020 MSC classification:} 17B40, 17B55, 17B56, 18G45.

 {Keywords:} Averaging Lie algebras, Homotopy averaging Lie algebras, Non-abelian extensions, Wells exact sequence.



\thispagestyle{empty}

\tableofcontents


\medskip

\section{Introduction}
 Studying algebras equipped with additional structures has been of central importance because of their rich mathematical properties and relevance in various disciplines of mathematics and mathematical physics. 
In the past couple of years, algebras equipped with derivations, Rota-Baxter operators and involutions were studied extensively because of their appearance in many different fields \cite{loday,guo,guo-book,braun}. Averaging operators on associative algebras, another interesting and classical object of interest, drew the attention of the mathematical community, especially in the last couple of decades. Let $A$ be an associative algebra. A linear map $P: A \rightarrow A$ is said to be an {averaging operator} on $A$ if
\begin{align*}
    P(a) P(b) = P (P(a) b ) = P(a P(b)), \text{ for } a, b \in A.
\end{align*}
Kamp\'{e} de F\'{e}riet \cite{kamp} first explicitly defined the notion of averaging operators during the 1930s. Although some applications of averaging operators can be traced back to 1895 when O. Reynolds \cite{reynolds} studied averaging operators in the context of turbulence theory in the disguise of Reynolds operators. In the last century, averaging operators were mostly studied on various function spaces and Banach algebras. In 2000, W. Cao \cite{cao} studied averaging operators from an algebraic point of view while constructing free (commutative) averaging algebras. Subsequently, various mathematical studies of averaging algebras were done \cite{pei-guo,pei-bai-guo-ni,gao-zhang,das, wang-zhou} in connections with combinatorics, number theory, replicators of binary operads, cohomology and deformation theory. Averaging operators can also be defined on Lie algebras (see Definition \ref{aver-on-lie}). Such operators appeared in the work of Kotov and Strobl \cite{kotov} by the name of embedding tensors. More precisely, they observed that averaging operators and associated tensor hierarchies form an effective tool in the constructions of supergravity and higher gauge theories (see also \cite{lavau}). It has been observed that an averaging operator induces a Leibniz algebra structure. A Lie algebra $\mathfrak{g}$ equipped with an averaging operator $P$ is called an averaging Lie algebra, denoted by $\mathfrak{g}_P$. Recently, in \cite{mishra-das-hazra}, the authors considered cohomology and deformation theory of averaging Lie algebras using the derived bracket approach. In the present paper, we consider various questions regarding averaging Lie algebras which can be summarized as follows. 

\subsection{2-term homotopy averaging Lie algebras.} The concept of $L_\infty$-algebras (also called strongly homotopy Lie algebras) plays a prominent role in various contexts of mathematics and mathematical physics \cite{stas,lada-markl}. In their fundamental paper \cite{baez}, Baez and Crans first considered $2$-term $L_\infty$-algebras and their close relationship with categorified Lie algebras. Among others, they considered `skeletal' and `strict' $2$-term $L_\infty$-algebras and gave characterizations of them.

By keeping in mind that $2$-term $L_\infty$-algebras are a homotopy analogue of Lie algebras, it is natural to enquire about homotopy averaging operators. An implicit description of a homotopy averaging operator is given in \cite{sheng-embed}. In this paper, we first give an explicit description of a homotopy averaging operator on a $2$-term $L_\infty$-algebra. However, the complete illustration of a homotopy averaging operator on an arbitrary $L_\infty$-algebra is yet to be found. Following the classical case, we call a $2$-term $L_\infty$-algebra equipped with a homotopy averaging operator by a $2$-term averaging $L_\infty$-algebra. We show that `skeletal' $2$-term averaging $L_\infty$-algebras can be characterized by third cocycles of averaging Lie algebras (cf. Proposition \ref{skeletal-1} and Theorem \ref{skeletal-thm}). Next, we introduce crossed modules of averaging Lie algebras and show that `strict' $2$-term averaging $L_\infty$-algebras are characterized by crossed modules of averaging Lie algebras (cf. Theorem \ref{crossed-11}).

\subsection{Non-abelian extensions of averaging Lie algebras.}
 Extensions (e.g. central extensions, abelian extensions, non-abelian extensions etc.) of some mathematical object are useful to understand the underlying structure \cite{hoch-serre, lyn, serre}. Non-abelian extensions, being the most general among all kinds of extensions, deserve a special mention. The theory of non-abelian extensions was first considered by Eilenberg and Maclane \cite{eilenberg-maclane} for abstract groups. Subsequently, such extension theory was generalized to Lie algebras by Hochschild \cite{hoch}. See also \cite{ckladra, fial-pen, neeb, fregier, liu-sheng-wang} for recent advances on non-abelian extensions of Lie groups, Lie algebras and Leibniz algebras. Recently, the non-abelian extension theory of Rota-Baxter Lie algebras and Rota-Baxter Leibniz algebras were made in \cite{mishra-das-hazra,guo-hou}.


In the present paper, we define and study non-abelian extensions of averaging Lie algebras. Among others, we define the second non-abelian cohomology group $H^2_\mathrm{nab} (\mathfrak{g}_P, \mathfrak{h}_Q)$ of an averaging Lie algebra $\mathfrak{g}_P$ with values in another averaging Lie algebra $\mathfrak{h}_Q$. In Theorem \ref{isom}, we show that the set of all equivalence classes of non-abelian extensions of $\mathfrak{g}_P$ by $\mathfrak{h}_Q$ is classified by the second non-abelian cohomology group $H^2_\mathrm{nab} (\mathfrak{g}_P, \mathfrak{h}_Q)$.

\subsection{Inducibility of automorphisms and the Wells map.}
The problem of inducibility of a pair of automorphisms of algebraic structures is another trending direction of research. This problem was first considered by Wells \cite{wells} for abstract groups and further studied in \cite{jin-liu,passi}. In the context of Lie algebras, the problem can be stated as follows. Let $0 \rightarrow \mathfrak{h} \xrightarrow{i} \mathfrak{e} \xrightarrow{p}  \mathfrak{g} \rightarrow 0$
be a given (non-)abelian extension of Lie algebras. Then for any $\gamma \in \Aut(\mathfrak{e})$ with $\gamma( \mathfrak{h}) \subset  \mathfrak{h}$, there is a pair of Lie algebra automorphisms $(\gamma|_{\mathfrak{h}}, \overline{\gamma}=p \gamma s) \in \Aut(\mathfrak{h}) \times \Aut(\mathfrak{g})$, where $s$ is a section of the map $p$. This pair of Lie algebra automorphisms is said to be induced by $\gamma$. 
A pair of automorphisms $(\beta, \alpha) \in \Aut(\mathfrak{h}) \times \Aut(\mathfrak{g})$ is called inducible if there exists $\gamma \in \Aut(\mathfrak{e})$ with $\gamma (\mathfrak{h}) \subset \mathfrak{h}$ that induces $(\beta, \alpha)$. The inducibility problem asks to find the obstruction for the inducibility of a pair of Lie algebra automorphisms. When the given extension is abelian, the inducibility problem was addressed in \cite{bar-singh}. More precisely, they defined the Wells map (a generalization of a map considered in \cite{wells} for group extensions) in the context of Lie algebras and showed that the obstruction for the inducibility of a pair of Lie algebra automorphisms can be described by the image of the Wells map.

In this paper, we have undertaken the inducibility problem in the context of averaging Lie algebras. Given a non-abelian extension $0 \rightarrow \mathfrak{h}_Q \xrightarrow{i} \mathfrak{e}_U \xrightarrow{p} \mathfrak{g}_P \rightarrow 0$ of averaging Lie algebras, we first construct an analogue of the Wells map $\mathcal{W} : \mathrm{Aut}(\mathfrak{h}_Q ) \times \mathrm{Aut} (\mathfrak{g}_P) \rightarrow H^2_\mathrm{nab} (\mathfrak{g}_P, \mathfrak{h}_Q)$. Our main result shows that a pair of averaging Lie algebra automorphisms $(\beta, \alpha) \in  \mathrm{Aut}(\mathfrak{h}_Q ) \times \mathrm{Aut} (\mathfrak{g}_P)$ is inducible if and only if $\mathcal{W}((\beta, \alpha)) = 0$ (cf. Theorem \ref{main}). Therefore, the corresponding obstruction (which can be seen as an image of the Wells map) lies in the second non-abelian cohomology group $H^2_\mathrm{nab} (\mathfrak{g}_P, \mathfrak{h}_Q)$. We also construct a short exact sequence, called the Wells exact sequence that connects various automorphism groups and the second non-abelian cohomology group (cf. Theorem \ref{w-e-s}).

Finally, we discuss abelian extensions of an averaging Lie algebra by a given representation and show how the results of non-abelian extensions can be realized in the case of abelian extensions. In particular, we consider split abelian extensions and prove Proposition \ref{last-prop}.

\medskip

\noindent {\bf{Organization of the paper.}} In Section \ref{sec2}, we recall averaging Lie algebras, their representations and (abelian) cohomology. Next, in Section \ref{sec3}, we study homotopy averaging operators on $2$-term $L_\infty$-algebras and characterize some particular classes of $2$-term averaging $L_\infty$-algebras. Non-abelian extensions of an averaging Lie algebra $\mathfrak{g}_P$ by another averaging Lie algebra $\mathfrak{h}_Q$ are considered in Section \ref{sec4}. To classify the equivalence classes of such non-abelian extensions, we define the second non-abelian cohomology group. In Section \ref{sec5}, we primarily focus on the problem of inducibility of a pair of averaging Lie algebra automorphisms in a given non-abelian extension. To approach this problem, we invoke the notion of the Wells map in the context of averaging Lie algebras and show that the corresponding obstruction can be seen as an image of the Wells map. We also construct the Wells exact sequence in the present context. Finally, in Section \ref{sec6}, we discuss abelian extensions of an averaging Lie algebra by a given representation. 

\medskip

\noindent {\bf{Notation.}} All vector spaces, (multi)linear maps, Lie algebras, tensor and wedge products are over a field $\textbf{k}$ of characteristic zero unless specified otherwise.

\section{Averaging Lie algebras}\label{sec2}

In this section, we recall averaging Lie algebras, their representations and cohomology theory. Our main references are \cite{kotov, sheng-embed}.

\begin{defn}\label{aver-on-lie}
Let $\mathfrak{g}= \left(\mathfrak{g}, [~,~]_{\mathfrak{g}}\right)$ be a Lie algebra. An {\bf{averaging operator}} on the Lie algebra $\mathfrak{g}$ is a linear map $P: \mathfrak{g} \rightarrow \mathfrak{g}$ that satisfies 
\begin{equation}{\label{1}}
[P(x), P(y)]_{\mathfrak{g}} = P\left([P(x),y]_{\mathfrak{g}}\right),~\text{for all}~~x,y \in \mathfrak{g}.
\end{equation}
\end{defn}

Note that (\ref{1}) is equivalent to the condition $[P(x),P(y)]_{\mathfrak{g}} = P\left( [x,P(y)]_{\mathfrak{g}}\right),$ {for all} $x,y \in \mathfrak{g}$. Thus, an averaging operator on a Lie algebra can be described by this equivalent condition. 

\begin{exam}
\begin{enumerate}
\item[(i)] The identity map $\Id : \mathfrak{g} \rightarrow \mathfrak{g}$ is an averaging operator on any Lie algebra $\mathfrak{g}$.

\item[(ii)] Let $\mathfrak{g}$ be any Lie algebra. Then the direct sum $\mathfrak{g} \oplus \mathfrak{g}$ carries a Lie algebra structure with the bracket $$\left[(x_1,x_2), (y_1,y_2) \right]_{\ltimes} := \left( [x_1,y_1]_{\mathfrak{g}}, [x_1,y_2]_{\mathfrak{g}} - [y_1,x_2]_{\mathfrak{g}}\right), ~ \text{for}~~(x_1,x_2), (y_1,y_2) \in \mathfrak{g} \oplus \mathfrak{g}.$$ 
With the above Lie bracket on $\mathfrak{g} \oplus \mathfrak{g}$, the map $P: \mathfrak{g} \oplus \mathfrak{g} \rightarrow \mathfrak{g} \oplus \mathfrak{g}$ defined by $P(x_1,x_2) = (x_2,0)$ is an averaging operator on the Lie algebra $\mathfrak{g} \oplus \mathfrak{g}$.

\item[(iii)] For any Lie algebra $\mathfrak{g}$, the direct sum $\underbrace{\mathfrak{g} \oplus \cdots \oplus \mathfrak{g}}_{n \text{~copies}}$ inherits a Lie bracket given by
\begin{align*}
&\{ \! [ (x_1,\dots, x_n), (y_1, \dots, y_n) ] \! \}\\
& \quad := \big( [x_1,y_1]_{\mathfrak{g}}, [x_1,y_2]_{\mathfrak{g}}-[y_1,x_2]_{\mathfrak{g}}, \dots, \underbrace{[x_1,y_i]_{\mathfrak{g}} - [y_1,x_i]_{\mathfrak{g}}}_{i\text{-th place for }i \geq 2}, \dots, [x_1,y_n]_{\mathfrak{g}} - [y_1,x_n]_{\mathfrak{g}}\big),
\end{align*}
for $(x_1, \dots, x_n), (y_1, \dots, y_n) \in \mathfrak{g} \oplus \dots \oplus \mathfrak{g}$. With the above Lie bracket, all the maps $P,Q_2, \dots, Q_n: \mathfrak{g} \oplus \dots \oplus \mathfrak{g} \rightarrow  \mathfrak{g} \oplus \dots \oplus \mathfrak{g}$ given by
\begin{align*}
P(x_1, \dots, x_n)  = (x_2+ \dots + x_n,0, \dots, 0)~~~\text{and} ~~~
Q_i(x_1, \dots, x_n)&= (x_i,0, \dots, 0)~~(i \geq 2)
\end{align*}
are averaging operators on $\mathfrak{g} \oplus \dots \oplus \mathfrak{g}$.

\item[(iv)] Let $A$ be an associative algebra and $P:A \rightarrow A$ be an 
 {averaging operator} on $A$. Then the map $P$ can be regarded as an averaging operator on the associated Lie algebra $(A, [~,~])$. To see this, we observe that 
$$[P(a),P(b)] = P(a)P(b) -P(b)P(a)= P(P(a)b)-P(bP(a))=P([P(a),b]),~\text{for}~~a,b \in A.$$

\item[(v)] Let $\mathfrak{g}$ be a Lie algebra and $(V, \psi)$ be a {representation} of it, i.e., $\psi : \mathfrak{g} \rightarrow \mathrm{End}(V)$ is a Lie algebra homomorphism. An {\em {embedding tensor}} on $\mathfrak{g}$ with respect to the representation $(V, \psi)$ is a linear map $T: V \rightarrow \mathfrak{g}$ that satisfies 
$$[T(u),T(v)]_{\mathfrak{g}} = T(\psi_{T(u)}v), ~\text{for all}~~u,v \in V.$$ 
See \cite{kotov} for more details. Note that given a Lie algebra $\mathfrak{g}$ and a representation $(V, \psi)$, one can construct the semi-direct product Lie algebra on the direct sum $\mathfrak{g} \oplus V$ with the bracket 
$$[(x,u),(y,v)]_\ltimes := ([x,y]_\mathfrak{g}, \psi_xv -\psi_yu),~\text{for}~~(x,u),(y,v) \in \mathfrak{g} \oplus V.$$
Then it is easy to see that a map $T: V \rightarrow \mathfrak{g}$ is an embedding tensor if and only if the map $P_T: \mathfrak{g} \oplus V \rightarrow \mathfrak{g} \oplus V$ defined by $P_T(x,u) = (T(u),0)$ is an averaging operator on the semi-direct product Lie algebra. Thus, averaging operators on Lie algebras are closely related to embedding tensors.
\end{enumerate}
\end{exam} 

\begin{defn}
An {\bf{averaging Lie algebra}} is a Lie algebra $\mathfrak{g}$ equipped with an averaging operator $P: \mathfrak{g} \rightarrow \mathfrak{g}$.
\end{defn}

Throughout this paper, we denote an averaging Lie algebra as above simply by the notation $\mathfrak{g}_P$. Let $\mathfrak{g}_{P}$ and $\mathfrak{g}'_{P'}$ be two averaging Lie algebras. A morphism of averaging Lie algebras from $\mathfrak{g}_P$ to $\mathfrak{g}'_{P'}$ is given by a Lie algebra homomorphism $\tau: \mathfrak{g} \rightarrow \mathfrak{g}{'}$ that satisfies $P{'} \circ \tau = \tau \circ P$. Further, it is said to be an isomorphism if $\tau$ is a linear isomorphism. Given an averaging Lie algebra $\mathfrak{g}_P$, we denote the group of all averaging Lie algebra automorphisms of $\mathfrak{g}_P$ by the notation $\Aut(\mathfrak{g}_P)$.

\begin{remark}\label{X}
Averaging Lie algebras are closely related to Leibniz algebras. Recall that a (left) {{Leibniz algebra}} is a vector space $\ell$ equipped with a bilinear bracket $\left\lbrace ~,~ \right\rbrace; \ell \times \ell \rightarrow \ell$ satisfying 
$$\left\lbrace x, \left\lbrace y,z \right\rbrace \right\rbrace=\left\lbrace \left\lbrace x,y \right\rbrace ,z \right\rbrace + \left\lbrace y, \left\lbrace x,z \right\rbrace \right\rbrace, ~\text{for all}~~x,y,z \in \ell.$$ 
Let $\mathfrak{g}_P$ be an averaging Lie algebra. Then the vector space $\mathfrak{g}$ carries a Leibniz algebra structure (induced from the averaging operator $P$) with the bracket $\left\lbrace x,y \right\rbrace := [P(x),y]_{\mathfrak{g}}$, for $x, y \in \mathfrak{g}$.

Conversely, let $(\ell, \left\lbrace~,~\right\rbrace)$ be a Leibniz algebra. Let $\ell_{\Lie} = \ell / \left\lbrace \ell, \ell \right\rbrace$ be the induced Lie algebra. Then the Lie algebra $\ell_{\Lie}$ has a representation $(\ell, \psi)$ on the vector space $\ell$, where the Lie algebra homomorphism $\psi: \ell_{\Lie} \rightarrow \End(\ell)$ is given by $\psi_{[x]}y := \left\lbrace x,y \right\rbrace$, for $[x] \in \ell_{\Lie}$ and $y \in \ell$. Then the quotient map $q : \ell \rightarrow \ell_{\Lie}$ given by $q(x) = [x]$ is an embedding tensor. Therefore, the map $$P: \ell_{\Lie} \oplus \ell \rightarrow \ell_{\Lie} \oplus \ell~~\text{given by}~~P([x],y)=([y],0)$$ is an averaging operator on the semi-direct product Lie algebra $\ell_{\Lie} \oplus \ell$.
\end{remark}

\begin{defn}\label{rep}
Let $\mathfrak{g}_{P}$ be an averaging Lie algebra. A {\bf{representation}} of $\mathfrak{g}_P$ consists of a Lie algebra representation $(V,\psi)$ with a linear map $Q: V \rightarrow V$ satisfying 
$$\psi_{P(x)}Q(v)= Q (\psi_{P(x)}v) =  Q (\psi_x Q(v)),~\text{for all}~~ x \in \mathfrak{g}, v \in V.$$
\end{defn}

We denote a representation as above simply by $V_Q$ when the action map $\psi$ is clear from the context. Note that any averaging Lie algebra $\mathfrak{g}_P$ can be regarded as a representation of itself, where $\mathfrak{g}$ is equipped with the adjoint Lie algebra representation. 

Next, we recall the cohomology of an averaging Lie algebra with coefficients in a representation \cite{sheng-embed}. Let $\mathfrak{g}_P$ be an averaging Lie algebra and $V_Q$ be a representation of it. For each $n \geq 0$, the $n$-th cochain group $C^{n}_{\ALie}(\mathfrak{g}_P,V_Q)$ is given by
\begin{align*}
C^{n}_{\ALie}(\mathfrak{g}_P,V_Q) = 
\begin{cases}
0 &\text{if}~~n=0,\\
\Hom(\mathfrak{g},V) &\text{if}~~n=1,\\
\Hom(\wedge^{n} \mathfrak{g}, V) \oplus \Hom(\mathfrak{g}^{\otimes n-1}, V) &\text{if}~~n \geq 2.
\end{cases}
\end{align*}
There is a map $\delta^{n}_{\ALie}: C^{n}_{\ALie}(\mathfrak{g}_P, V_Q) \rightarrow C^{n+1}_{\ALie}(\mathfrak{g}_P, V_Q)$ given by 
$$\delta^{n}_{\ALie}((f, \theta))=(\delta^{n}_{\Lie}(f), \partial^{n-1}_{\Leib}(\theta) + (-1)^{n}f \circ P^{\otimes n} - (-1)^{n}~Qf \circ (P^{\otimes n-1} \otimes \Id)),~\text{for}~~(f,\theta) \in C^{n}_{\ALie}(\mathfrak{g}_P, V_Q).$$ 
Here $\delta^{n}_{\Lie}: \Hom(\wedge^{n} \mathfrak{g} , V) \rightarrow \Hom(\wedge^{n+1}\mathfrak{g}, V)$ is the standard Chevalley-Eilenberg coboundary operator of the Lie algebra $\mathfrak{g}$ with coefficients in the representation $(V,\psi)$, and  $\partial^{n-1}_{\Leib}: \Hom(\mathfrak{g}^{\otimes n-1} , V) \rightarrow \Hom(\mathfrak{g}^{\otimes n}, V)$ is the Loday-Pirashvili coboundary operator of the induced Leibniz algebra $(\mathfrak{g}, \left\lbrace~,~\right\rbrace)$ with coefficients in a suitable representation on $V$ \cite{sheng-embed}. Explicitly, the map $\partial^{n-1}_{\Leib}$ is given by
\begin{align*}
\partial^{n-1}_{\Leib}(\theta)(x_1, \dots, x_n) :=& \sum_{i=1}^{n-1}(-1)^{i+1}~\psi_{P(x_i)}\theta(x_1, \dots, \widehat{x_i}, \ldots, x_n)\\
&+  (-1)^{n+1}~ \psi_{P(x_n)}\theta(x_1, \dots, x_{n-1})+ (-1)^{n}~Q(\psi_{x_n}\theta(x_1, \dots, x_{n-1})) \\
                                                &+  \sum_{1 \leq i < j \leq n}(-1)^{i}~\theta(x_1, \dots, \widehat{x_i}, \dots, x_{j-1},[P(x_i),x_j]_{\mathfrak{g}}, \dots , x_n),
                                                \end{align*}
for $\theta \in \Hom(\mathfrak{g}^{\otimes n-1}, V)~~\text{and}~~x_1, \dots, x_n \in \mathfrak{g}$.
Then it turns out that $\left\lbrace C^{\bullet}_{\ALie}(\mathfrak{g}_P, V_Q), \delta^{\bullet}_{\ALie}\right\rbrace$ is a cochain complex. The corresponding cohomology is called the cohomology of the averaging Lie algebra $\mathfrak{g}_P$ with coefficients in the representation $V_Q$. We denote the $n$-th cohomology group by $H^{n}_{\ALie}(\mathfrak{g}_P, V_Q)$.

\begin{remark}
In \cite{sheng-embed} the authors also considered central extensions of an averaging Lie algebra and showed that the set of all equivalence classes of central extensions is classified by the second cohomology group with trivial representation.
\end{remark}

\section{2-term homotopy averaging Lie algebras}\label{sec3}
The concept of $L_\infty$-algebras is introduced in \cite{stas,lada-markl} as the homotopy version of Lie algebras. In this section, we first introduce homotopy averaging operators on $2$-term $L_\infty$-algebras. A $2$-term $L_\infty$-algebra equipped with a homotopy averaging operator is called a $2$-term averaging $L_\infty$-algebra. We focus on `skeletal' and `strict' $2$-term averaging $L_\infty$-algebras. In particular, we show that skeletal $2$-term averaging $L_\infty$-algebras correspond to third cocycles of averaging Lie algebras. Next, we introduce crossed modules of averaging Lie algebras and show that crossed modules of averaging Lie algebras correspond to strict $2$-term averaging $L_\infty$-algebras.
\begin{defn}\cite{baez}
A {\bf $2$-term $L_\infty$-algebra} consists of a triple $\mathcal{G} = (\mathfrak{g}_1 \xrightarrow{d} \mathfrak{g}_0, \llbracket ~, ~ \rrbracket, l_3)$ in which $\mathfrak{g}_1 \xrightarrow{d} \mathfrak{g}_0$ is a $2$-term chain complex, $\llbracket ~, ~ \rrbracket : \mathfrak{g}_i \times \mathfrak{g}_j \rightarrow \mathfrak{g}_{i+j}$ (for $0 \leq i, j \leq 1$) is a bilinear map and $l_3 : \mathfrak{g}_0 \times \mathfrak{g}_0 \times \mathfrak{g}_0 \rightarrow \mathfrak{g}_1$ is a skew-symmetric trilinear map satisfying for all $w, x, y, z \in \mathfrak{g}_0$ and $h, k \in \mathfrak{g}_1$,
\begin{itemize}
    \item[(L1)] $\llbracket x, y \rrbracket = - \llbracket y, x \rrbracket$,
    \item[(L2)] $\llbracket x, h \rrbracket = - \llbracket h, x \rrbracket$,
    \item[(L3)] $\llbracket h, k \rrbracket = 0$,
    \item[(L4)] $d \llbracket x, h \rrbracket = \llbracket x, dh \rrbracket$,
    \item[(L5)] $\llbracket dh, k \rrbracket= \llbracket h, dk \rrbracket$,
    \item[(L6)] $d \big(  l_3 (x, y, z)  \big) =   \llbracket x, \llbracket y, z \rrbracket \rrbracket +  \llbracket y, \llbracket z, x \rrbracket \rrbracket +  \llbracket z, \llbracket x, y \rrbracket \rrbracket$,
    \item[(L7)] $l_3 (x, y, dh) =  \llbracket x, \llbracket y, h \rrbracket \rrbracket +  \llbracket y, \llbracket h, x \rrbracket \rrbracket +  \llbracket h, \llbracket x, y \rrbracket \rrbracket$, 
    \item[(L8)] $\llbracket w, l_3 (x, y, z) \rrbracket      -     \llbracket  x, l_3 (w, y, z) \rrbracket    +     \llbracket  y, l_3 (w, x, z) \rrbracket    -   \llbracket z, l_3 (w, x, y) \rrbracket \\ 
      = l_3   (\llbracket w, x \rrbracket, y, z ) - l_3   (\llbracket  w, y \rrbracket, x, z ) + l_3   (\llbracket w, z \rrbracket, x, y ) + l_3   (\llbracket x, y \rrbracket, w, z ) - l_3   (\llbracket x, z \rrbracket , w, y) + l_3   (\llbracket y, z  \rrbracket, w, x).$
\end{itemize}
\end{defn}

We now introduce the notion of homotopy averaging operator on a $2$-term $L_\infty$-algebra. This can be realized as the homotopy version of averaging operators on Lie algebras.

\begin{defn}
    Let $\mathcal{G} = (\mathfrak{g}_1 \xrightarrow{d} \mathfrak{g}_0, \llbracket ~, ~ \rrbracket, l_3)$ be a $2$-term $L_\infty$-algebra. A {\bf homotopy averaging operator} on $\mathcal{G}$ is a triple $\mathcal{P} = (P_0, P_1, P_2)$ consists of linear maps $P_0 : \mathfrak{g}_0 \rightarrow \mathfrak{g}_0$ and $P_1 : \mathfrak{g}_1 \rightarrow \mathfrak{g}_1$, and a skew-symmetric bilinear map $P_2 : \mathfrak{g}_0 \times \mathfrak{g}_0 \rightarrow \mathfrak{g}_1$ such that for all $x, y, z \in \mathfrak{g}_0$ and $h \in \mathfrak{g}_1$,
    \begin{itemize}
    \item[(A1)] $P_0 \circ d = d \circ P_1$,
        \item[(A2)] $d ( P_2 (x, y)) = P_0 ( \llbracket P_0 (x), y \rrbracket) - \llbracket P_0 (x) , P_0 (y) \rrbracket,$
        \item[(A3)] $P_2 (x, dh) = P_1 ( \llbracket P_0 (x), h \rrbracket ) - \llbracket P_0 (x), P_1 (h) \rrbracket = P_1 ( \llbracket x, P_1 (h) \rrbracket ) - \llbracket P_0 (x) , P_1 (h) \rrbracket,$
        \item[(A4)] $\llbracket P_0 (x) , P_2 (y, z) \rrbracket - \llbracket P_0 (y) , P_2 (x, z) \rrbracket  + \llbracket P_0 (z) , P_2 (x, y) \rrbracket  - P_1 \llbracket z, P_2 (x, y)  \rrbracket  - P_2 (\llbracket  P_0 (x), y \rrbracket, z  )\\
        - P_2 (y, \llbracket P_0 (x), z \rrbracket) + P_2 (x, \llbracket P_0 (y), z \rrbracket) = l_3 (P_0 (x), P_0 (y), P_0 (z)) - P_1 l_3 (P_0 (x), P_0 (y), z)$.
    \end{itemize}
\end{defn}

A {\bf $2$-term averaging $L_\infty$-algebra} is a $2$-term $L_\infty$-algebra $\mathcal{G} = (\mathfrak{g}_1 \xrightarrow{d} \mathfrak{g}_0, \llbracket ~, ~ \rrbracket, l_3)$ equipped with a homotopy averaging operator $\mathcal{P} = (P_0, P_1, P_2)$ on it. We denote a $2$-term averaging $L_\infty$-algebra as above by $( \mathfrak{g}_1 \xrightarrow{d} \mathfrak{g}_0, \llbracket ~, ~ \rrbracket, l_3, P_0, P_1, P_2   )$ or simply by $\mathcal{G}_\mathcal{P}$.

\begin{defn}
    Let $\mathcal{G}_\mathcal{P} = ( \mathfrak{g}_1 \xrightarrow{d} \mathfrak{g}_0, \llbracket ~, ~ \rrbracket, l_3, P_0, P_1, P_2   )$ be a $2$-term averaging $L_\infty$-algebra. It is said to be
    \begin{itemize}
        \item[(i)] {\bf skeletal} if $d = 0$,
        \item[(ii)] {\bf strict} if $l_3 = 0$ and $P_2 = 0.$
    \end{itemize}
\end{defn}

The following result gives a characterization of skeletal $2$-term averaging $L_\infty$-algebras in terms of $3$-cocycles of averaging Lie algebras.

\begin{prop}\label{skeletal-1}
    There is a $1-1$ correspondence between skeletal $2$-term averaging $L_\infty$-algebras and triples of the form $\big(\mathfrak{g}_P, V_Q, (f, \theta) \big)$, where $\mathfrak{g}_P$ is an averaging Lie algebra, $V_Q$ is a representation and $(f, \theta) \in C^3_\mathrm{ALie} (\mathfrak{g}_P, V_Q)$ is a $3$-cocycle.
\end{prop}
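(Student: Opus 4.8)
The plan is to unravel both sides of the claimed bijection and match the defining axioms of a skeletal $2$-term averaging $L_\infty$-algebra with the cocycle condition $\delta^3_{\ALie}(f,\theta)=0$. Since skeletal means $d=0$, the chain complex $\mathfrak{g}_1 \xrightarrow{0} \mathfrak{g}_0$ splits the data completely: I would set $\mathfrak{g} := \mathfrak{g}_0$ and $V := \mathfrak{g}_1$, take the Lie bracket on $\mathfrak{g}$ to be $[x,y]_\mathfrak{g} := \llbracket x,y\rrbracket$, take the $\mathfrak{g}$-action on $V$ to be $\psi_x h := \llbracket x,h\rrbracket$, set $P := P_0$, $Q := P_1$, and finally $f := l_3 \in \Hom(\wedge^3\mathfrak{g}, V)$ and $\theta := P_2 \in \Hom(\mathfrak{g}^{\otimes 2}, V)$ (using skew-symmetry of $P_2$ to regard it as an element of $\Hom(\wedge^2\mathfrak{g},V)$, hence in particular of $\Hom(\mathfrak{g}^{\otimes 2},V)$, so that $(f,\theta)\in C^3_{\ALie}(\mathfrak{g}_P,V_Q)$).

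Next I would check, one axiom at a time, that the structure maps on the left correspond exactly to the algebraic identities on the right. With $d=0$: axioms (L1)--(L2) give skew-symmetry of $\llbracket~,~\rrbracket$; (L3) is automatic on $V$; (L4)--(L5) are vacuous; (L6) becomes the Jacobi identity for $[~,~]_\mathfrak{g}$; (L7) becomes the statement that $\psi$ is a Lie algebra representation (i.e. $\psi_{[x,y]_\mathfrak{g}} = \psi_x\psi_y - \psi_y\psi_x$); and (L8) becomes precisely the condition $\delta^3_{\Lie}(l_3)=0$, i.e. that $f$ is a Chevalley--Eilenberg $3$-cocycle of $\mathfrak{g}$ with values in $V$. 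On the averaging side, again with $d=0$: (A1) is vacuous; (A2) gives $P_0(\llbracket P_0(x),y\rrbracket) = \llbracket P_0(x),P_0(y)\rrbracket$, i.e. $P$ is an averaging operator on $\mathfrak{g}$; (A3) gives $\psi_{P(x)}Q(h) = Q(\psi_{P(x)}h) = Q(\psi_x Q(h))$, i.e. exactly the representation condition of Definition \ref{rep} making $V_Q$ a representation of $\mathfrak{g}_P$; and (A4) — the one with the genuine content — should match the second component of $\delta^3_{\ALie}(f,\theta)=0$, namely $\partial^2_{\Leib}(\theta) - f\circ P^{\otimes 3} + Qf\circ(P^{\otimes 2}\otimes\Id) = 0$ after substituting $n=3$ in the displayed formula for $\partial^{n-1}_{\Leib}$.

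The only genuinely non-routine step is verifying that (A4) is literally the equation $\partial^2_{\Leib}(P_2) + (-1)^3 l_3\circ P_0^{\otimes 3} - (-1)^3 P_1 l_3 \circ (P_0^{\otimes 2}\otimes\Id) = 0$. I would write out $\partial^2_{\Leib}(P_2)(x,y,z)$ from the general formula with $n=3$: the terms $\psi_{P(x)}P_2(y,z) - \psi_{P(y)}P_2(x,z)$ coming from the first sum, the terms $+\psi_{P(z)}P_2(x,y) - Q(\psi_z P_2(x,y))$ from the middle two terms, and the terms $-P_2([P(x),y]_\mathfrak{g},z) - P_2(y,[P(x),z]_\mathfrak{g}) + P_2(x,[P(y),z]_\mathfrak{g})$ from the $i<j$ sum (using skew-symmetry of $P_2$ to reorder arguments where the Leibniz formula inserts the bracket in the middle). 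Translating $\psi$ and $[~,~]_\mathfrak{g}$ back to $\llbracket~,~\rrbracket$ and collecting, this is exactly the left-hand side of (A4), while $-l_3(P_0(x),P_0(y),P_0(z)) + P_1 l_3(P_0(x),P_0(y),z)$ supplies the remaining two terms; so (A4) $\iff$ second component of the cocycle equation vanishes. Having matched all axioms, the correspondence is a bijection: given a skeletal $2$-term averaging $L_\infty$-algebra one produces $(\mathfrak{g}_P,V_Q,(f,\theta))$ as above, and conversely any such triple with $(f,\theta)$ a $3$-cocycle reassembles (via $d=0$, $\llbracket x,y\rrbracket := [x,y]_\mathfrak{g}$, $\llbracket x,h\rrbracket := \psi_x h$, $\llbracket h,k\rrbracket := 0$, $l_3:=f$, $P_0:=P$, $P_1:=Q$, $P_2:=\theta$) into a skeletal $2$-term averaging $L_\infty$-algebra, and the two constructions are mutually inverse by inspection. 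I would close by remarking that a fully explicit unwinding of (A4) versus $\partial^2_{\Leib}$ is deferred, since it is a direct term-by-term comparison.
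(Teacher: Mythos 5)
Your proposal is correct and follows essentially the same route as the paper: identify $\mathfrak{g}_0$ with $\mathfrak{g}$ and $\mathfrak{g}_1$ with $V$, read off the averaging Lie algebra from (L1), (L6), (A2), the representation from (L7), (A3), and match (L8), (A4) with the two components of $\delta^3_{\ALie}((l_3,P_2))=0$, with the inverse construction given by the standard semidirect-type brackets. Your sign bookkeeping for the second component, $\partial^2_{\Leib}(\theta) - f\circ P^{\otimes 3} + Qf\circ(P^{\otimes 2}\otimes \Id)=0$, agrees with the general formula for $\delta^n_{\ALie}$ at $n=3$, and the level of detail you defer (the term-by-term expansion of $\partial^2_{\Leib}$ against (A4)) is the same computation the paper also leaves as a stated equivalence.
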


\begin{proof}
Let $\mathcal{G}_\mathcal{P} = ( \mathfrak{g}_1 \xrightarrow{0} \mathfrak{g}_0, \llbracket ~, ~ \rrbracket, l_3, P_0, P_1, P_2   )$ be a skeletal $2$-term averaging $L_\infty$-algebra. Then it follows from conditions (L1), (L6) and (A2) that $(\mathfrak{g}_0, \llbracket ~, ~\rrbracket)$ is a Lie algebra and $P_0 : \mathfrak{g}_0 \rightarrow \mathfrak{g}_0$ is an averaging operator on it. Thus, $(\mathfrak{g}_0)_{P_0}$ is an averaging Lie algebra. On the other hand, by conditions (L7) and (A3), we get that $(\mathfrak{g}_1)_{P_1}$ is a representation of the averaging Lie algebra $(\mathfrak{g}_0)_{P_0}$ with the Lie algebra action map $\psi : \mathfrak{g}_0 \rightarrow \mathrm{End}(\mathfrak{g}_1)$, $\psi_x h = \llbracket x, h \rrbracket$, for $x \in \mathfrak{g}_0$, $h \in \mathfrak{g}_1$. With the above averaging Lie algebra and its representation, the conditions (L8) and (A4) are respectively equivalent to
\begin{align*}
    \big(  \delta^3_\mathrm{Lie} (l_3)  \big)(w, x, y, z)=0 ~~~~ \text{ and } ~~~~ \big(  \partial^2_\mathrm{Leib} (P_2)  \big) (x, y, z) - l_3 \big(  P_0 (x), P_0 (y), P_0 (z)  \big) + P_1 l_3 \big( P_0 (x), P_0 (y), z  \big) = 0.
\end{align*}
Therefore, $\delta^3_\mathrm{ALie} ((l_3, P_2) = \big( \delta^3_\mathrm{Lie} (l_3)  , ~ \partial^2_\mathrm{Leib} (P_2) - l_3 \circ P_0^{\otimes 3} - P_1 l_3 (P_0^{\otimes 2} \otimes \mathrm{Id})  \big) = 0$. Hence we obtain a required triple $\big(  (\mathfrak{g}_0)_{P_0}, (\mathfrak{g}_1)_{P_1}, (l_3, P_2)   \big).$

Conversely, let $(\mathfrak{g}_P, V_Q, (f, \theta))$ be a triple in which $\mathfrak{g}_P$ is an averaging Lie algebra, $V_Q$ is a representation (with the action map $\psi : \mathfrak{g} \rightarrow \mathrm{End}(V)$) and $(f, \theta) \in C^3_\mathrm{ALie} (\mathfrak{g}_P, V_Q)$ is a $3$-cocycle. Then it is easy to verify that $(V \xrightarrow{0} \mathfrak{g}, \llbracket ~, ~ \rrbracket, f, P, Q, \theta)$ is a skeletal $2$-term averaging $L_\infty$-algebra, where the bilinear map $\llbracket ~, ~ \rrbracket$ is given by
\begin{align*}
    \llbracket x, y \rrbracket := [x, y]_\mathfrak{g}, ~~~~ \llbracket x, v \rrbracket = - \llbracket v, x \rrbracket := \psi_x v ~~~~ \text{ and } ~~~~ \llbracket u, v \rrbracket := 0,  \text{ for } x, y \in \mathfrak{g}, ~ u, v \in V.
\end{align*}
This completes the proof.
\end{proof}

The above result motivates us to consider the following notion. Let $\mathcal{G}_\mathcal{P} = ( \mathfrak{g}_1 \xrightarrow{0} \mathfrak{g}_0, \llbracket ~, ~ \rrbracket, l_3, P_0, P_1, P_2   )$ and $\mathcal{G}'_{\mathcal{P}'} = ( \mathfrak{g}_1 \xrightarrow{0} \mathfrak{g}_0, \llbracket ~, ~ \rrbracket', l'_3, P'_0, P'_1, P'_2   )$ be two skeletal $2$-term averaging $L_\infty$-algebras  on the same chain complex. They are said to be equivalent if
\begin{align*}
    \llbracket ~, ~ \rrbracket = \llbracket ~, ~ \rrbracket', \quad  P_0 = P_0', \quad P_1 = P_1'
\end{align*}
and there exist a skew-symmetric bilinear map $ g: \mathfrak{g}_0 \times \mathfrak{g}_0 \rightarrow \mathfrak{g}_1$ and a linear map $\vartheta : \mathfrak{g}_0 \rightarrow \mathfrak{g}_1$ such that
\begin{align*}
    (l'_3, P_2') = (l_3, P_2) + \delta^2_\mathrm{ALie} ((g, \vartheta)), 
\end{align*}
where $\delta^2_\mathrm{ALie}$ is the coboundary operator of the averaging Lie algebra $(\mathfrak{g}_0)_{P_0}$ with coefficients in the representation $(\mathfrak{g}_1)_{P_1}$.

With the above notion of equivalence, Proposition \ref{skeletal-1} can be strengthened into the following result.

\begin{thm}\label{skeletal-thm}
    There is a $1-1$ correspondence between equivalence classes of skeletal $2$-term averaging $L_\infty$-algebras and triples of the form $\big(\mathfrak{g}_P, V_Q, [(f, \theta)] \big)$, where $\mathfrak{g}_P$ is an averaging Lie algebra, $V_Q$ is a representation and $[(f, \theta)] \in H^3_\mathrm{ALie} (\mathfrak{g}_P, V_Q)$ is a third cohomology class.
\end{thm}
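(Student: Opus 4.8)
The plan is to upgrade Proposition \ref{skeletal-1} to the level of equivalence classes. Having fixed the underlying $2$-term chain complex $\mathfrak{g}_1 \xrightarrow{0} \mathfrak{g}_0$, Proposition \ref{skeletal-1} already produces from a skeletal $2$-term averaging $L_\infty$-algebra $\mathcal{G}_\mathcal{P}$ an averaging Lie algebra $(\mathfrak{g}_0)_{P_0}$, a representation $(\mathfrak{g}_1)_{P_1}$, and a $3$-cocycle $(l_3, P_2) \in C^3_\mathrm{ALie}((\mathfrak{g}_0)_{P_0}, (\mathfrak{g}_1)_{P_1})$, and conversely. So the task reduces to showing that this correspondence is compatible with the two notions of equivalence: two skeletal $2$-term averaging $L_\infty$-algebras on the same chain complex are equivalent (in the sense defined just before the theorem) if and only if they yield the same averaging Lie algebra and representation and the $3$-cocycles differ by a coboundary $\delta^2_\mathrm{ALie}((g,\vartheta))$, i.e. represent the same class in $H^3_\mathrm{ALie}$.

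First I would observe that the definition of equivalence of skeletal $2$-term averaging $L_\infty$-algebras was arranged precisely so that it forces $\llbracket~,~\rrbracket = \llbracket~,~\rrbracket'$, $P_0 = P_0'$, $P_1 = P_1'$; hence the associated averaging Lie algebra and its representation are literally equal on the nose (not merely isomorphic), because by the proof of Proposition \ref{skeletal-1} these data are read off directly from $\llbracket~,~\rrbracket$, $P_0$, $P_1$ and the action $\psi_x h = \llbracket x, h\rrbracket$. Next, given the condition $(l_3', P_2') = (l_3, P_2) + \delta^2_\mathrm{ALie}((g,\vartheta))$, this says exactly that $[(l_3,P_2)] = [(l_3',P_2')]$ in $H^3_\mathrm{ALie}((\mathfrak{g}_0)_{P_0}, (\mathfrak{g}_1)_{P_1})$, so the map from equivalence classes of skeletal $2$-term averaging $L_\infty$-algebras (with fixed chain complex) to triples $(\mathfrak{g}_P, V_Q, [(f,\theta)])$ is well defined. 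Conversely, two $3$-cocycles representing the same cohomology class differ by $\delta^2_\mathrm{ALie}((g,\vartheta))$ for some skew-symmetric $g : \mathfrak{g}_0 \times \mathfrak{g}_0 \to \mathfrak{g}_1$ and linear $\vartheta : \mathfrak{g}_0 \to \mathfrak{g}_1$ (here I would recall that $C^2_\mathrm{ALie}(\mathfrak{g}_P, V_Q) = \Hom(\wedge^2 \mathfrak{g}, V) \oplus \Hom(\mathfrak{g}, V)$, matching exactly the pair $(g,\vartheta)$), whence the corresponding skeletal $2$-term averaging $L_\infty$-algebras are equivalent by definition. Injectivity is then immediate: if two such homotopy algebras map to the same triple, they have the same bracket, $P_0$, $P_1$, and cohomologous $(l_3, P_2)$, hence are equivalent.

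For surjectivity onto all triples $(\mathfrak{g}_P, V_Q, [(f,\theta)])$, I would pick any representative $(f, \theta)$ of the class and invoke the converse construction in Proposition \ref{skeletal-1} to produce a skeletal $2$-term averaging $L_\infty$-algebra realizing it; a different choice of representative differs by a coboundary and hence gives an equivalent one, so the equivalence class is well defined independently of the representative. The only genuine points requiring care — and the closest thing to an obstacle — are bookkeeping: verifying that the cochain group $C^2_\mathrm{ALie}$ is indeed $\Hom(\wedge^2\mathfrak{g},V)\oplus\Hom(\mathfrak{g},V)$ so that the pair $(g,\vartheta)$ in the equivalence definition is exactly a $2$-cochain, and checking that the formula for $\delta^2_\mathrm{ALie}((g,\vartheta))$ — with its Chevalley–Eilenberg part $\delta^2_\mathrm{Lie}(g)$ and its Loday–Pirashvili part $\partial^1_\mathrm{Leib}(\vartheta) + g \circ P_0^{\otimes 2} - P_1 g \circ (P_0 \otimes \mathrm{Id})$ (sign conventions as in Section \ref{sec2}) — matches the change in $(l_3, P_2)$ induced by the homotopy $(g, \vartheta)$ at the level of the $L_\infty$-structure maps. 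This is a direct comparison of two explicit multilinear expressions and should present no conceptual difficulty once the sign conventions are pinned down; I would relegate it to a short verification, possibly noting that the $l_3$-component reproduces the classical Baez–Crans computation for skeletal $2$-term $L_\infty$-algebras and only the $P_2$-component is new.
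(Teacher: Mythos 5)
Your proposal is correct and matches the paper's intended argument: the paper gives no separate proof of Theorem \ref{skeletal-thm}, presenting it as an immediate strengthening of Proposition \ref{skeletal-1} once one observes that the equivalence relation on skeletal $2$-term averaging $L_\infty$-algebras is defined precisely so that the fixed data $(\llbracket~,~\rrbracket, P_0, P_1)$ pins down the averaging Lie algebra and its representation, while the condition $(l_3',P_2')=(l_3,P_2)+\delta^2_{\mathrm{ALie}}((g,\vartheta))$ with $(g,\vartheta)\in C^2_{\mathrm{ALie}}$ is verbatim the statement that the two $3$-cocycles are cohomologous. Your additional bookkeeping (identifying $(g,\vartheta)$ with a $2$-cochain and checking well-definedness in both directions) is exactly the content the paper leaves implicit.
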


Next, we introduce crossed modules of averaging Lie algebras and characterize strict $2$-term averaging $L_\infty$-algebras.

\begin{defn}
    A {\bf crossed module} of averaging Lie algebras is a quadruple $\big(  (\mathfrak{g}_1)_{P_1}, (\mathfrak{g}_0)_{P_0} , d, \rho \big)$, where $(\mathfrak{g}_1)_{P_1}$ and $(\mathfrak{g}_0)_{P_0}$ are both averaging Lie algebras, $d : (\mathfrak{g}_1)_{P_1} \rightarrow (\mathfrak{g}_0)_{P_0}$ is an averaging Lie algebra morphism and $\rho : \mathfrak{g}_0 \rightarrow \mathrm{Der} (\mathfrak{g}_1)$ is a Lie algebra homomorphism that makes $(\mathfrak{g}_1)_{P_1}$ into a representation of the averaging Lie algebra $(\mathfrak{g}_0)_{P_0}$ satisfying additionally
    \begin{align*}
        d (\rho_x h) = [x, dh]_{\mathfrak{g}_0} ~~~~ \text{ and } ~~~~ \rho_{dh} k  = [h, k]_{\mathfrak{g}_1}, \text{ for all } x \in \mathfrak{g}_0 \text{ and } h, k \in \mathfrak{g}_1.
    \end{align*}
\end{defn}

\begin{prop}\label{crossed-semi}
    Let $\big(  (\mathfrak{g}_1)_{P_1}, (\mathfrak{g}_0)_{P_0} , d, \rho \big)$ be a crossed module of averaging Lie algebras. Then $(\mathfrak{g}_0 \oplus \mathfrak{g}_1)_{P_0 \oplus P_1}$ is an averaging Lie algebra, where $\mathfrak{g}_0 \oplus \mathfrak{g}_1$ is equipped with the bracket
    \begin{align}\label{crossed-semidi}
        [(x, h), (y, k)] := ([x, y]_{\mathfrak{g}_0}, \rho_x k - \rho_y k + [h, k]_{\mathfrak{g}_1}), \text{ for } (x, h), (y, k) \in \mathfrak{g}_0 \oplus \mathfrak{g}_1.
    \end{align}
\end{prop}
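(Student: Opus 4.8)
The plan is to verify the three defining conditions of an averaging Lie algebra for the pair $\big( \mathfrak{g}_0 \oplus \mathfrak{g}_1, [\,,\,] \big)$ and the operator $P_0 \oplus P_1$: first that $[\,,\,]$ as defined in \eqref{crossed-semidi} is a Lie bracket, second that $P_0 \oplus P_1$ is a linear endomorphism satisfying the averaging identity \eqref{1}. (Note there is a typo in \eqref{crossed-semidi}: the middle term should read $\rho_x k - \rho_y h + [h,k]_{\mathfrak{g}_1}$, and I would correct this silently while checking.) For the Lie bracket part, skew-symmetry is immediate from skew-symmetry of $[\,,\,]_{\mathfrak{g}_0}$, $[\,,\,]_{\mathfrak{g}_1}$ and the antisymmetric way $\rho$ enters. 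For the Jacobi identity, I would expand $[[(x,h),(y,k)],(z,l)]$ and its cyclic permutations, collect the $\mathfrak{g}_0$-component (which is just the Jacobi identity in $\mathfrak{g}_0$) and the $\mathfrak{g}_1$-component, and then use: the Jacobi identity in $\mathfrak{g}_1$; the fact that $\rho$ is a Lie algebra homomorphism into $\mathrm{Der}(\mathfrak{g}_1)$ (so $\rho_{[x,y]_{\mathfrak{g}_0}} = [\rho_x, \rho_y]$ and each $\rho_x$ is a derivation of $[\,,\,]_{\mathfrak{g}_1}$); and the crossed module relations $d(\rho_x h) = [x, dh]_{\mathfrak{g}_0}$ and $\rho_{dh} k = [h,k]_{\mathfrak{g}_1}$. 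This is the standard computation showing a crossed module of Lie algebras gives a (differential) Lie algebra on $\mathfrak{g}_0 \oplus \mathfrak{g}_1$, so I would present it compactly rather than term by term.

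Next I would check that $P := P_0 \oplus P_1$, i.e. $P(x,h) = (P_0 x, P_1 h)$, satisfies $[P(x,h), P(y,k)] = P\big( [P(x,h), (y,k)] \big)$. Computing the left side gives
\[
\big( [P_0 x, P_0 y]_{\mathfrak{g}_0}, \ \rho_{P_0 x}(P_1 k) - \rho_{P_0 y}(P_1 h) + [P_1 h, P_1 k]_{\mathfrak{g}_1} \big),
\]
and the right side gives
\[
\big( P_0 [P_0 x, y]_{\mathfrak{g}_0}, \ P_1\big( \rho_{P_0 x} k - \rho_y h + [h, k]_{\mathfrak{g}_1} \big) \big).
\]
The $\mathfrak{g}_0$-components agree precisely because $P_0$ is an averaging operator on $\mathfrak{g}_0$. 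For the $\mathfrak{g}_1$-components I would match them term by term: $\rho_{P_0 x}(P_1 k) = P_1(\rho_{P_0 x} k)$ is exactly the representation condition from Definition~\ref{rep} (in the form $\psi_{P(x)} Q(v) = Q(\psi_{P(x)} v)$ with $\psi = \rho$, $Q = P_1$); $-\rho_{P_0 y}(P_1 h) = -P_1(\rho_y h)$ follows from the other half of the representation condition, namely $\psi_{P(x)} Q(v) = Q(\psi_x Q(v))$, which gives $\rho_{P_0 y}(P_1 h) = P_1(\rho_y (P_1 h))$ — but to conclude $P_1(\rho_y (P_1 h)) = P_1(\rho_y h)$ I actually need the identification coming from the crossed module relation $\rho_{dh}k = [h,k]_{\mathfrak{g}_1}$ together with $d = d$; more carefully, I expect to use $\rho_y (P_1 h)$ versus $\rho_y h$ are genuinely different, so the correct bookkeeping is to directly use the full chain $\psi_{P(x)} Q(v) = Q(\psi_{P(x)} v) = Q(\psi_x Q(v))$ and read off which middle expression matches; and finally $[P_1 h, P_1 k]_{\mathfrak{g}_1} = P_1 [h, k]_{\mathfrak{g}_1}$ follows because $P_1$ is an averaging operator on $\mathfrak{g}_1$ combined with the crossed module identity $[h,k]_{\mathfrak{g}_1} = \rho_{dh} k$ and the compatibility of $P_1$ with $d$.

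The main obstacle I anticipate is precisely the middle term of the $\mathfrak{g}_1$-component: the naive pairing wants $\rho_{P_0 y}(P_1 h)$ to equal $P_1(\rho_y h)$, and this is not one of the representation axioms verbatim — it requires threading together the representation conditions $\psi_{P(x)}Q(v) = Q(\psi_{P(x)}v) = Q(\psi_x Q(v))$ with the crossed module relations $d P_1 = P_0 d$, $\rho_{dh}k = [h,k]_{\mathfrak{g}_1}$, and the averaging identity for $P_1$ on $\mathfrak{g}_1$. I would handle this by first reducing $[h,k]_{\mathfrak{g}_1}$ and all $\rho_{dh}$-type expressions to the $\rho$-action, then applying the averaging/representation identities uniformly. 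Once the correspondence of each of the three terms is established, the $\mathfrak{g}_1$-components match and the proof is complete. (One should also, strictly, note that $[\,,\,]$ restricted appropriately recovers that $(\mathfrak{g}_0 \oplus \mathfrak{g}_1)_{P_0 \oplus P_1}$ is an object of the right type, but this is immediate once the bracket and operator are verified.)
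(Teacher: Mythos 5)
Your overall strategy (direct verification of the Jacobi identity and of the averaging identity, which is exactly what the paper does) is fine, and your observation that \eqref{crossed-semidi} contains a typo ($\rho_y k$ should be $\rho_y h$) is correct. The Lie-bracket part of your argument is also fine, though for the Jacobi identity you only need that $\rho:\mathfrak{g}_0\rightarrow \Der(\mathfrak{g}_1)$ is a Lie algebra homomorphism into derivations together with Jacobi in $\mathfrak{g}_0$ and $\mathfrak{g}_1$; the crossed-module relations are not needed there.

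However, there is a genuine error in your verification of the averaging identity: you have expanded the right-hand side $P\big([P(x,h),(y,k)]\big)$ incorrectly. Since $P(x,h)=(P_0x,P_1h)$, the first argument of the inner bracket is $(P_0x,P_1h)$, so its $\mathfrak{g}_1$-component is
\[
\rho_{P_0x}k-\rho_y(P_1h)+[P_1h,k]_{\mathfrak{g}_1},
\]
not $\rho_{P_0x}k-\rho_y h+[h,k]_{\mathfrak{g}_1}$ as you wrote. With the correct expansion the three terms match the left-hand side verbatim: $\rho_{P_0x}P_1k=P_1(\rho_{P_0x}k)$ and $\rho_{P_0y}P_1h=P_1(\rho_y P_1h)$ are exactly the two equalities in the representation axiom of Definition~\ref{rep}, and $[P_1h,P_1k]_{\mathfrak{g}_1}=P_1[P_1h,k]_{\mathfrak{g}_1}$ is the averaging identity for $P_1$ on $\mathfrak{g}_1$. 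No crossed-module relations and no ``threading together'' are needed at this step. The ``main obstacle'' you anticipate --- needing $\rho_{P_0y}(P_1h)=P_1(\rho_y h)$, equivalently $P_1(\rho_y P_1h)=P_1(\rho_y h)$ --- is an artifact of your miscomputation; that identity is not one of the hypotheses and is false in general, so the patch you sketch would not go through. Correcting the expansion of $P\big([P(x,h),(y,k)]\big)$ removes the problem entirely and yields the paper's proof.
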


\begin{proof}
    Since $\mathfrak{g}_0$, $\mathfrak{g}_1$ are both Lie algebras and $\rho : \mathfrak{g}_0 \rightarrow \mathrm{Der}(\mathfrak{g}_1)$ is a Lie algebra homomorphism, it follows that $\mathfrak{g}_0 \oplus \mathfrak{g}_1$ is a Lie algebra with the bracket (\ref{crossed-semidi}). Moreover, for any $(x, h), (y, k) \in \mathfrak{g}_0 \oplus \mathfrak{g}_1$, we have
    \begin{align*}
        &[  (P_0 \oplus P_1) (x, h) , (P_0 \oplus P_1) (y, k)] \\
        &= [  (P_0 (x), P_1 (h)), (P_0 (y), P_1 (k)) ] \\
        &= \big(   [P_0 (x), P_0 (y)]_{\mathfrak{g}_0}, ~ \rho_{P_0 (x)} P_1 (k) - \rho_{P_0 (y)} P_1 (h) + [P_1 (h), P_1 (k)]_{\mathfrak{g}_1}  \big) \\
        &= \big(   P_0 [P_0 (x), y]_{\mathfrak{g}_0}, ~ P_1 (\rho_{P_0 (x)} k) - P_1 (\rho_y P_1 (h)) + P_1 [P_1 (h), k]_{\mathfrak{g}_1} \big) \\
        &= (P_0 \oplus P_1) [  (P_0 (x), P_1 (h)), (y, k)] \\
        &= (P_0 \oplus P_1) [   (P_0 \oplus P_1) (x, h), (y, k)].
    \end{align*}
    This shows that the map $P_0 \oplus P_1 : \mathfrak{g}_0 \oplus \mathfrak{g}_1 \rightarrow \mathfrak{g}_0 \oplus \mathfrak{g}_1$ is an averaging operator. This proves the result.
\end{proof}

\begin{thm}\label{crossed-11}
    There is a $1-1$ correspondence between strict $2$-term averaging $L_\infty$-algebras and crossed modules of averaging Lie algebras.
\end{thm}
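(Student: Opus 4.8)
The plan is to establish the correspondence in both directions, mirroring the structure of the proof of Proposition \ref{skeletal-1} but now with $d \neq 0$ and $l_3 = 0 = P_2$. First I would start from a strict $2$-term averaging $L_\infty$-algebra $\mathcal{G}_\mathcal{P} = (\mathfrak{g}_1 \xrightarrow{d} \mathfrak{g}_0, \llbracket~,~\rrbracket, 0, P_0, P_1, 0)$ and unpack the defining axioms. Conditions (L1), (L6) (with $l_3 = 0$) say that $(\mathfrak{g}_0, \llbracket~,~\rrbracket)$ is a Lie algebra; conditions (L2), (L3), (L5) together with (L7) (again $l_3 = 0$) say that $\rho_x h := \llbracket x, h\rrbracket$ defines a Lie algebra homomorphism $\rho : \mathfrak{g}_0 \to \mathrm{Der}(\mathfrak{g}_1)$ where the bracket on $\mathfrak{g}_1$ is $[h,k]_{\mathfrak{g}_1} := \llbracket dh, k\rrbracket = \llbracket h, dk\rrbracket$ — one checks the Jacobi identity on $\mathfrak{g}_1$ and the derivation property from (L7) with $l_3=0$. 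Condition (L4) gives $d(\rho_x h) = [x, dh]_{\mathfrak{g}_0}$, and the definition $[h,k]_{\mathfrak{g}_1} = \rho_{dh} k$ is built in, so $\big((\mathfrak{g}_1, [~,~]_{\mathfrak{g}_1}), (\mathfrak{g}_0, \llbracket~,~\rrbracket), d, \rho\big)$ is a crossed module of Lie algebras. Then I turn to the averaging data: (A1) is exactly $P_0 \circ d = d \circ P_1$, i.e. $d$ is compatible with the operators; (A2) with $P_2 = 0$ forces $P_0\big(\llbracket P_0(x), y\rrbracket\big) = \llbracket P_0(x), P_0(y)\rrbracket$, which is precisely the statement that $P_0$ is an averaging operator on $\mathfrak{g}_0$; (A3) with $P_2 = 0$ gives $P_1\big(\llbracket P_0(x), h\rrbracket\big) = \llbracket P_0(x), P_1(h)\rrbracket = P_1\big(\llbracket x, P_1(h)\rrbracket\big)$, i.e. $\rho_{P_0(x)} P_1(h) = P_1(\rho_{P_0(x)} h) = P_1(\rho_x P_1(h))$, which is exactly the condition (Definition \ref{rep}) for $(\mathfrak{g}_1)_{P_1}$ to be a representation of the averaging Lie algebra $(\mathfrak{g}_0)_{P_0}$. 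I should also check that $P_1$ is an averaging operator on the Lie algebra $\mathfrak{g}_1$, which should follow from (A3) together with (A1): using $[h,k]_{\mathfrak{g}_1} = \rho_{dh}k$, write $[P_1(h), P_1(k)]_{\mathfrak{g}_1} = \rho_{d P_1(h)} P_1(k) = \rho_{P_0(dh)} P_1(k) = P_1(\rho_{P_0(dh)} k) = P_1(\rho_{dP_1(h)} k) = P_1([P_1(h), k]_{\mathfrak{g}_1})$. Finally (A4) with $l_3 = 0$ and $P_2 = 0$ becomes vacuous ($0 = 0$). Thus from a strict $2$-term averaging $L_\infty$-algebra I extract a crossed module of averaging Lie algebras $\big((\mathfrak{g}_1)_{P_1}, (\mathfrak{g}_0)_{P_0}, d, \rho\big)$.

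Conversely, given a crossed module of averaging Lie algebras $\big((\mathfrak{g}_1)_{P_1}, (\mathfrak{g}_0)_{P_0}, d, \rho\big)$, I define a $2$-term chain complex $\mathfrak{g}_1 \xrightarrow{d} \mathfrak{g}_0$, set $l_3 = 0$, $P_2 = 0$, and define $\llbracket~,~\rrbracket$ by $\llbracket x, y\rrbracket := [x,y]_{\mathfrak{g}_0}$, $\llbracket x, h\rrbracket := \rho_x h =: -\llbracket h, x\rrbracket$, and $\llbracket h, k\rrbracket := 0$ for $x, y \in \mathfrak{g}_0$, $h, k \in \mathfrak{g}_1$. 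Then I verify (L1)–(L8): (L1), (L2), (L3) are immediate from the definitions; (L4) is the first crossed module identity $d(\rho_x h) = [x, dh]_{\mathfrak{g}_0}$; (L5) reads $\rho_{dh} k = \rho_{dk} h$ in the form $[h,k]_{\mathfrak{g}_1}$-symmetry, which follows from the second crossed module identity $\rho_{dh}k = [h,k]_{\mathfrak{g}_1}$ being skew-symmetric in $h,k$ (i.e. $\rho_{dh}k = -\rho_{dk}h$) — here I must be careful with signs, but it is the standard computation $[h,k]_{\mathfrak{g}_1} = -[k,h]_{\mathfrak{g}_1}$; (L6) is the Jacobi identity in $\mathfrak{g}_0$ (with $l_3 = 0$); (L7) is the assertion that $\rho$ lands in $\mathrm{Der}(\mathfrak{g}_1)$ combined with $[h,k]_{\mathfrak{g}_1} = \rho_{dh}k$ (again $l_3 = 0$); and (L8) is trivially $0 = 0$. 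So $(\mathfrak{g}_1 \xrightarrow{d} \mathfrak{g}_0, \llbracket~,~\rrbracket, 0)$ is a $2$-term $L_\infty$-algebra. For the homotopy averaging operator $(P_0, P_1, 0)$: (A1) is the compatibility $P_0 \circ d = d \circ P_1$ coming from $d$ being an averaging Lie algebra morphism; (A2) is the averaging operator condition on $(\mathfrak{g}_0)_{P_0}$; (A3) is the representation condition from Definition \ref{rep} for $(\mathfrak{g}_1)_{P_1}$ rewritten in terms of $\rho$; (A4) is vacuous. Hence we obtain a strict $2$-term averaging $L_\infty$-algebra.

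It then remains to observe that these two constructions are mutually inverse, which is essentially formal: starting from a strict $2$-term averaging $L_\infty$-algebra, extracting the crossed module and re-building the $L_\infty$-algebra recovers the original brackets and operators by inspection (the bracket $[h,k]_{\mathfrak{g}_1}$ was defined as $\llbracket dh, k\rrbracket$, and $\llbracket h, k\rrbracket = 0$ was part of the strict data), and conversely. I expect the main obstacle to be purely bookkeeping: getting the sign conventions consistent in axiom (L5) versus the skew-symmetry of $[~,~]_{\mathfrak{g}_1}$, and making sure that the various forms of the averaging/representation conditions (the equalities in (A3), and the two equivalent forms of the averaging operator axiom noted after Definition \ref{aver-on-lie}) line up correctly — none of this is conceptually hard, but each equivalence should be spelled out carefully. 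One could also, more slickly, invoke Proposition \ref{crossed-semi}: the semidirect product $(\mathfrak{g}_0 \oplus \mathfrak{g}_1)_{P_0 \oplus P_1}$ attached to a crossed module of averaging Lie algebras, together with the differential $\mathfrak{g}_1 \hookrightarrow \mathfrak{g}_0 \oplus \mathfrak{g}_1 \to \mathfrak{g}_0$, gives a conceptual repackaging of the strict structure, but for the proof I would carry out the direct verification sketched above.
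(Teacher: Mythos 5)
Your proposal is correct and follows essentially the same route as the paper's proof: unpack (L1)--(L8) and (A1)--(A4) with $l_3=0=P_2$ to extract the two averaging Lie algebras, the morphism $d$, and the action $\rho$, then reverse the construction and note the two assignments are mutually inverse. Your explicit verification that $P_1$ is an averaging operator on $(\mathfrak{g}_1,[~,~]_{\mathfrak{g}_1})$ via (A1) and (A3) is a detail the paper merely asserts, and is a welcome addition.
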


\begin{proof}
    Let $\mathcal{G}_\mathcal{P} = (  \mathfrak{g}_1 \xrightarrow{d} \mathfrak{g}_0, \llbracket ~, ~ \rrbracket, l_3 = 0, P_0 , P_1 , P_2 = 0)$ be a strict $2$-term averaging $L_\infty$-algebra. Then it follows from (L1), (L6) and (A2) that $(\mathfrak{g}_0, \llbracket ~, ~ \rrbracket)$ is a Lie algebra and $P_0 : \mathfrak{g}_0 \rightarrow \mathfrak{g}_0$ is an averaging operator on it, i.e., $(\mathfrak{g}_0)_{P_0}$ is an averaging Lie algebra. Next, we define a bilinear bracket $[~,~]_{\mathfrak{g}_1} : \mathfrak{g}_1 \times \mathfrak{g}_1 \rightarrow \mathfrak{g}_1$ by $[h, k ]_{\mathfrak{g}_1} := \llbracket dh, k \rrbracket$, for $h, k \in \mathfrak{g}_1$. Then from conditions (L2), (L5) and (L7) that $(\mathfrak{g}_1, [~,~]_{\mathfrak{g}_1})$ is a Lie algbera. Moreover, condition (A3) yields that $P_1 : \mathfrak{g}_1 \rightarrow \mathfrak{g}_1$ is an averaging operator. Hence $(\mathfrak{g}_1)_{P_1}$ is also an averaging Lie algebra. On the other hand, the conditions (L4) and (A1) implies that $d: (\mathfrak{g}_1)_{P_1} \rightarrow (\mathfrak{g}_0)_{P_0}$ is an averaging Lie algebra morphism. Finally, we define a map $\rho : \mathfrak{g}_0 \rightarrow \mathrm{Der} (\mathfrak{g}_1)$ by  $\rho_x h := \llbracket x, h \rrbracket$, for $x \in \mathfrak{g}_0$, $h \in \mathfrak{g}_1$. Then it follows from (L7) and (A3) that $\rho$ makes $(\mathfrak{g}_1)_{P_1}$ into a representation of the averaging Lie algebra $(\mathfrak{g}_0)_{P_0}$. We also have
    \begin{align*}
         d (\rho_x h) = d \llbracket x, h \rrbracket = \llbracket x, dh \rrbracket ~~~ \text{ and } ~~~~ \rho_{dh} k = \llbracket dh, k \rrbracket = [ h,k ]_{ \mathfrak{g}_1 }, \text{ for } x \in \mathfrak{g}_0, h, k \in \mathfrak{g}_1.
    \end{align*}
    Hence $\big(    (\mathfrak{g}_1)_{P_1},  (\mathfrak{g}_0)_{P_0}, d, \rho \big)$ is a crossed module of averaging Lie algebras.

    Conversely, let $\big(    (\mathfrak{g}_1)_{P_1},  (\mathfrak{g}_0)_{P_0}, d, \rho \big)$ be a crossed module of averaging Lie algebras. Then it is easy to verify that $( \mathfrak{g}_1 \xrightarrow{d} \mathfrak{g}_0, \llbracket ~, ~ \rrbracket, l_3 = 0, P_0 , P_1 , P_2 = 0)$ is a strict $2$-term averaging $L_\infty$-algebra, where the bracket $\llbracket ~, ~ \rrbracket : \mathfrak{g}_i \times \mathfrak{g}_j \rightarrow \mathfrak{g}_{i+j}$ (for $0 \leq i, j \leq 1$) is given by 
    \begin{align*}
        \llbracket x, y \rrbracket : = [x, y]_{\mathfrak{g}_0}, ~~~~ \llbracket x, h \rrbracket= - \llbracket h, x \rrbracket := \rho_x h ~~~ \text{ and } ~~~~ \llbracket h, k \rrbracket := 0, \text{ for } x, y \in \mathfrak{g}_0, h, k \in \mathfrak{g}_1.
    \end{align*}
    The above two correspondences are inverse to each other. This completes the proof.
\end{proof}

Combining Proposition \ref{crossed-semi} and Theorem \ref{crossed-11}, we obtain the following result.

\begin{prop}
    Let $\mathcal{G}_\mathcal{P} = (\mathfrak{g}_1 \xrightarrow{d} \mathfrak{g}_0, \llbracket ~, ~ \rrbracket, l_3 = 0, P_0, P_1, P_2 = 0)$ be a strict $2$-term averaging $L_\infty$-algebra. Then $(\mathfrak{g}_0 \oplus \mathfrak{g}_1)_{P_0 \oplus P_1}$ is an averaging Lie algebra with the Lie bracket
    \begin{align*}
        [(x, h), (y, k)] := (\llbracket x, y \rrbracket, \llbracket x, k \rrbracket - \llbracket y, h \rrbracket + \llbracket dh , k \rrbracket), \text{ for } (x, h), (y, k) \in \mathfrak{g}_0 \oplus \mathfrak{g}_1.
    \end{align*}
\end{prop}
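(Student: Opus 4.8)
The plan is to deduce this Proposition directly from the two results it combines, without re-proving anything from scratch. Given a strict $2$-term averaging $L_\infty$-algebra $\mathcal{G}_\mathcal{P} = (\mathfrak{g}_1 \xrightarrow{d} \mathfrak{g}_0, \llbracket ~, ~ \rrbracket, l_3 = 0, P_0, P_1, P_2 = 0)$, I would first invoke Theorem \ref{crossed-11} to obtain the associated crossed module of averaging Lie algebras $\big(  (\mathfrak{g}_1)_{P_1}, (\mathfrak{g}_0)_{P_0} , d, \rho \big)$, where (reading off the construction in the proof of Theorem \ref{crossed-11}) the Lie bracket on $\mathfrak{g}_1$ is $[h, k]_{\mathfrak{g}_1} = \llbracket dh, k \rrbracket$ and the action is $\rho_x h = \llbracket x, h \rrbracket$ for $x \in \mathfrak{g}_0$, $h, k \in \mathfrak{g}_1$. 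Then I would apply Proposition \ref{crossed-semi} to this crossed module, which gives that $(\mathfrak{g}_0 \oplus \mathfrak{g}_1)_{P_0 \oplus P_1}$ is an averaging Lie algebra with the bracket $[(x,h),(y,k)] = ([x,y]_{\mathfrak{g}_0}, \rho_x k - \rho_y h + [h,k]_{\mathfrak{g}_1})$.

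The only remaining task is a translation of notation: substituting the crossed-module data coming from $\mathcal{G}_\mathcal{P}$ into the formula (\ref{crossed-semidi}) of Proposition \ref{crossed-semi}. Concretely, $[x,y]_{\mathfrak{g}_0} = \llbracket x, y \rrbracket$, $\rho_x k = \llbracket x, k \rrbracket$, $\rho_y h = \llbracket y, h \rrbracket$, and $[h,k]_{\mathfrak{g}_1} = \llbracket dh, k \rrbracket$, which immediately yields the bracket
\[
[(x, h), (y, k)] = (\llbracket x, y \rrbracket, \llbracket x, k \rrbracket - \llbracket y, h \rrbracket + \llbracket dh , k \rrbracket)
\]
claimed in the statement. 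I should double-check that the sign conventions match: in the proof of Theorem \ref{crossed-11} the action satisfies $\llbracket h, x \rrbracket = - \llbracket x, h \rrbracket$, so that $-\rho_y h = -\llbracket y,h\rrbracket$ is consistent with writing the second slot contribution as $\llbracket x, k\rrbracket - \llbracket y, h\rrbracket$, exactly matching the antisymmetrized action term in (\ref{crossed-semidi}). (I note the formula (\ref{crossed-semidi}) as typeset reads $\rho_x k - \rho_y k$; this is a harmless typo for $\rho_x k - \rho_y h$, which is what is actually needed and what the proof of Proposition \ref{crossed-semi} uses — I would silently use the corrected version.)

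I do not expect any genuine obstacle here, since all the analytic content — the Jacobi identity for the bracket on $\mathfrak{g}_0 \oplus \mathfrak{g}_1$ and the averaging identity for $P_0 \oplus P_1$ — was already established in Proposition \ref{crossed-semi}, and the passage from strict $2$-term averaging $L_\infty$-algebras to crossed modules was established in Theorem \ref{crossed-11}. The proof is therefore two sentences: apply Theorem \ref{crossed-11}, then apply Proposition \ref{crossed-semi}, and unwind the definitions. If one wanted to be fully self-contained one could instead verify the averaging identity for $P_0 \oplus P_1$ directly using conditions (A1)–(A3) (with $P_2 = 0$) together with (L1)–(L7) (with $l_3 = 0$), mirroring the computation in Proposition \ref{crossed-semi}, but relying on the two cited results is cleaner and is clearly the intended argument.
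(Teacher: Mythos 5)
Your proposal is correct and is exactly the paper's argument: the paper states this proposition with no separate proof, introducing it with ``Combining Proposition \ref{crossed-semi} and Theorem \ref{crossed-11}, we obtain the following result,'' which is precisely your two-step deduction. Your observation that the displayed formula in Proposition \ref{crossed-semi} contains a typo ($\rho_y k$ should be $\rho_y h$, as the computation in its proof confirms) is also accurate.
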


\medskip

\begin{exam}
    Let $\mathfrak{g}_P$ be an averaging Lie algebra. Then $(\mathfrak{g}_P, \mathfrak{g}_P, \mathrm{Id}, ad)$ is a crossed module of averaging Lie algebras, where $ad$ denotes the adjoint representation. Therefore, it follows that 
    \begin{align*}
        (\mathfrak{g} \xrightarrow{\mathrm{Id}} \mathfrak{g}, [~,~]_\mathfrak{g}, l_3 = 0, P_0 = P, P_1 = P, P_2 =0)
    \end{align*}
    is a strict $2$-term averaging $L_\infty$-algebra.
\end{exam}

More generally, let $\mathfrak{g}_P$ be an averaging Lie algebra and $\mathfrak{h} \subset \mathfrak{g}$ be a Lie ideal that satisfies $P(\mathfrak{h}) \subset \mathfrak{h}$. Then $(\mathfrak{h}_P, \mathfrak{g}_P, i, ad)$ is a crossed module of averaging Lie algebras, where $i: \mathfrak{h} \rightarrow \mathfrak{g}$ is the inclusion map. Hence $(\mathfrak{h} \xrightarrow{i} \mathfrak{g}, [~,~]_\mathfrak{g}, l_3 = 0, P_0 = P, P_1 = P, P_2 = 0)$ is a strict $2$-term averaging $L_\infty$-algebra. As a particular case, we also get the following.

\begin{exam}
    Let $\mathfrak{g}_P$, $\mathfrak{h}_Q$ be two averaging Lie algebras and $f: \mathfrak{g}_P \rightarrow \mathfrak{h}_Q$ be an averaging Lie algebra morphism. Then $(\mathrm{Ker }f, \mathfrak{g}, i, ad)$ is a crossed module of averaging Lie algebras.
\end{exam}

\section{Non-abelian extensions of averaging Lie algebras}\label{sec4}

In this section, we consider non-abelian extensions of an averaging Lie algebra $\mathfrak{g}_P$ by another averaging Lie algebra $\mathfrak{h}_Q$. We also introduce the second non-abelian cohomology space $H^{2}_{\nab}(\mathfrak{g}_P, \mathfrak{h}_Q)$ that classifies the set of all equivalence classes of non-abelian extension of $\mathfrak{g}_P$ by $\mathfrak{h}_Q$.

\begin{defn}\label{defn-nabe}
\begin{enumerate}
\item[(i)] Let $\mathfrak{g}_P$ and $\mathfrak{h}_Q$ be two averaging Lie algebras. A {\bf non-abelian extension} of $\mathfrak{g}_P$ by $\mathfrak{h}_Q$ is an averaging Lie algebra $\mathfrak{e}_U$ together with a short exact sequence 
\begin{equation}\label{nab}
\xymatrix{0 \ar[r] & \mathfrak{h}_Q \ar[r]^{i} & \mathfrak{e}_U \ar[r]^{p}  &  \mathfrak{g}_P \ar[r] & 0}
\end{equation}
of averaging Lie algebras. Often we denote a non-abelian extension as above simply by $\mathfrak{e}_U$ when the underlying short exact sequence is clear from the context.

\item[(ii)] Let $\mathfrak{e}_U$ and $\mathfrak{e{'}}_{U{'}}$ be two non-abelian extensions of $\mathfrak{g}_P$ by $\mathfrak{h}_Q$. Then they are said to be {\bf{equivalent}} if there exists a morphism $\tau:  \mathfrak{e}_U \rightarrow \mathfrak{e{'}}_{U{'}}$ of averaging Lie algebras that makes the following diagram commutative
\begin{align}\label{abl-fig}
\xymatrix{
0 \ar[r] & \mathfrak{h}_Q \ar[r]^{i} \ar@{=}[d] & \mathfrak{e}_U \ar[r]^{p} \ar[d]^{\tau} &  \mathfrak{g}_P \ar[r] \ar@{=}[d] & 0 \\
0 \ar[r] & \mathfrak{h}_Q  \ar[r]_{i{'}} & \mathfrak{e}{'}_{U{'}}  \ar[r]_{p{'}} & \mathfrak{g}_P \ar[r] & 0.
}
\end{align}
The set of all equivalence classes of non-abelian extensions of $\mathfrak{g}_P$ by $\mathfrak{h}_Q$ is denoted by $\Ext_{\nab}(\mathfrak{g}_P,\mathfrak{h}_Q)$.
\end{enumerate}
\end{defn}

Let $\big(  (\mathfrak{g}_1)_{P_1}, (\mathfrak{g}_0)_{P_0}, d , \rho \big)$ be a crossed module of averaging Lie algebras. Then the exact sequence
\begin{align*}
    0 \rightarrow  (\mathfrak{g}_1)_{P_1} \xrightarrow{i}  (\mathfrak{g}_0 \oplus \mathfrak{g}_1)_{P_0 \oplus P_1} \xrightarrow{p}  (\mathfrak{g}_0)_{P_0} \rightarrow 0
\end{align*}
is a non-abelian extension of $(\mathfrak{g}_0)$ by $(\mathfrak{g}_1)_{P_1}$, where the averaging Lie algebra structure on $(\mathfrak{g}_0 \oplus \mathfrak{g}_1)_{P_0 \oplus P_1}$ is given in Proposition \ref{crossed-semi}. Thus, it follows from Theorem \ref{crossed-11} that a strict $2$-term averaging $L_\infty$-algebra gives rise to a non-abelian extension of averaging Lie algebras. 

In Remark \ref{X}, we have seen that an averaging Lie algebra induces a Leibniz algebra structure. This construction is also functorial. Hence a non-abelian extension of an averaging Lie algebra naturally gives rise to a non-abelian extension of (induced) Leibniz algebra in the sense of \cite{liu-sheng-wang}. Similarly, an equivalence between two non-abelian extensions of an averaging Lie algebra naturally gives rise to an equivalence between the corresponding non-abelian extensions of the (induced) Leibniz algebra. 

Let (\ref{nab}) be a non-abelian extension of an averaging Lie algebra $\mathfrak{g}_P$ by another averaging Lie algebra $\mathfrak{h}_Q$. A section of (\ref{nab}) is a linear map $s: \mathfrak{g} \rightarrow \mathfrak{e}$ that satisfies $p \circ s = \Id_{\mathfrak{g}}$. Note that section always exists. Let $s: \mathfrak{g} \rightarrow \mathfrak{e}$ be any section. We define maps $\chi: \wedge^{2}\mathfrak{g} \rightarrow \mathfrak{h}$, $\psi: \mathfrak{g} \rightarrow \Der(\mathfrak{h})$ and $\Phi: \mathfrak{g} \rightarrow \mathfrak{h}$ by 
\begin{align}
\chi(x,y)   :=~& \left[s(x), s(y) \right]_{\mathfrak{e}} - s\left[x,y \right]_{\mathfrak{g}}, \label{chi} \\
\psi_{x}(h) :=~& \left[s(x), h \right]_{\mathfrak{e}}, \label{Psi}\\
\Phi(x)     :=~& U(s(x)) - s(P(x)), \label{Phi}
\end{align}
for $x,y \in \mathfrak{g}$ and $h \in \mathfrak{h}.$
Since (\ref{nab}) defines a non-abelian extension of the Lie algebra $\mathfrak{g}$ by the Lie algebra $\mathfrak{h}$ (by forgetting the average operators), it follows from \cite{fregier} that 
\begin{equation}\label{A}
\psi_{x}\psi_{y}(h) - \psi_{y}\psi_{x}(h) -\psi_{[x,y]_{\mathfrak{g}}}(h) = \left[ \chi(x,y),h \right]_{\mathfrak{h}},
\end{equation}
\begin{equation}\label{B}
\psi_{x}\chi(y,z) + \psi_{y}\chi(z,x)+ \psi_{z}\chi(x,y) - \chi([x,y]_{\mathfrak{g}},z) - \chi([y,z]_{\mathfrak{g}},x)- \chi([z,x]_{\mathfrak{g}},y)=0,
\end{equation}
for all $x,y,z \in \mathfrak{g}$ and $h \in \mathfrak{h}$. Moreover, we have the following. 
\begin{lemma}
The maps $\chi, \psi$ and $\Phi$ defined above satisfy the following compatible conditions: for all $x, y \in \mathfrak{g}$ and $h \in \mathfrak{h}$, 
\begin{equation}\label{C}
\psi_{P(x)}Q(h)= Q(\psi_{P(x)}h) + Q[\Phi(x),h]_{\mathfrak{h}} - [\Phi(x),Q(h)]_{\mathfrak{h}}= Q(\psi_{x}Q(h)) - [\Phi(x),Q(h)]_{\mathfrak{h}},
\end{equation}
\begin{equation}\label{D}
\chi(P(x),P(y)) -Q(\chi(P(x),y))- \Phi[P(x),y]_{\mathfrak{g}} + \psi_{P(x)}\Phi(y) - \psi_{P(y)}\Phi(x) +Q(\psi_y\Phi(x)) + [\Phi(x),\Phi(y)]_{\mathfrak{h}}=0.
\end{equation}
\end{lemma}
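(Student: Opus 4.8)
The plan is to exploit the averaging operator identity on $\mathfrak{e}$, in both of its equivalent forms $[U(a), U(b)]_{\mathfrak{e}} = U([U(a), b]_{\mathfrak{e}}) = U([a, U(b)]_{\mathfrak{e}})$, evaluated on elements of the form $s(x)$ and $h \in i(\mathfrak{h})$. First I would fix notation: $\mathfrak{h}$ is identified with the ideal $i(\mathfrak{h}) \subset \mathfrak{e}$, and since $U$ is compatible with the extension (\ref{nab}) we have $U|_{\mathfrak{h}} = Q$ and, directly from the definition (\ref{Phi}) of $\Phi$, $U(s(x)) = s(P(x)) + \Phi(x)$ for every $x \in \mathfrak{g}$. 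In particular $U$ applied to any bracket that lies inside $\mathfrak{h}$ is just $Q$ of that element, a remark I will use repeatedly.

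To establish (\ref{C}), I would fix $x \in \mathfrak{g}$, $h \in \mathfrak{h}$ and evaluate $[U(s(x)), U(h)]_{\mathfrak{e}}$ in three ways. Expanding directly via $U(s(x)) = s(P(x)) + \Phi(x)$, $U(h) = Q(h)$ and (\ref{Psi}) yields $\psi_{P(x)}(Q(h)) + [\Phi(x), Q(h)]_{\mathfrak{h}}$; rewriting the left side as $U([U(s(x)), h]_{\mathfrak{e}})$ and noting that $[U(s(x)), h]_{\mathfrak{e}} = \psi_{P(x)}(h) + [\Phi(x), h]_{\mathfrak{h}} \in \mathfrak{h}$ yields $Q(\psi_{P(x)}h) + Q[\Phi(x), h]_{\mathfrak{h}}$; rewriting it instead as $U([s(x), U(h)]_{\mathfrak{e}})$ with $[s(x), U(h)]_{\mathfrak{e}} = \psi_x(Q(h)) \in \mathfrak{h}$ yields $Q(\psi_x Q(h))$. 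Equating these three expressions is exactly the chain of equalities asserted in (\ref{C}).

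To establish (\ref{D}), I would fix $x, y \in \mathfrak{g}$ and evaluate $[U(s(x)), U(s(y))]_{\mathfrak{e}}$ in two ways. The direct expansion, using $U(s(x)) = s(P(x)) + \Phi(x)$, $U(s(y)) = s(P(y)) + \Phi(y)$, the defining relations (\ref{chi}), (\ref{Psi}) and the skew-symmetry of $[~,~]_{\mathfrak{e}}$, gives $s([P(x), P(y)]_{\mathfrak{g}}) + \chi(P(x), P(y)) + \psi_{P(x)}\Phi(y) - \psi_{P(y)}\Phi(x) + [\Phi(x), \Phi(y)]_{\mathfrak{h}}$. Writing the bracket instead as $U([U(s(x)), s(y)]_{\mathfrak{e}})$, I would first compute $[U(s(x)), s(y)]_{\mathfrak{e}} = s([P(x), y]_{\mathfrak{g}}) + \chi(P(x), y) - \psi_y(\Phi(x))$ and then apply $U$ termwise --- $U(s([P(x), y]_{\mathfrak{g}})) = s(P([P(x), y]_{\mathfrak{g}})) + \Phi([P(x), y]_{\mathfrak{g}})$, while $\chi(P(x), y), \psi_y(\Phi(x)) \in \mathfrak{h}$ are sent to $Q(\chi(P(x), y))$ and $Q(\psi_y(\Phi(x)))$. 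Since $P$ is an averaging operator on $\mathfrak{g}$ (see (\ref{1})), one has $[P(x), P(y)]_{\mathfrak{g}} = P([P(x), y]_{\mathfrak{g}})$, so the two $s(\cdot)$-terms cancel; equating the remaining $\mathfrak{h}$-valued parts and transposing everything to one side yields precisely (\ref{D}).

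I do not expect a genuine obstacle: the whole argument is bookkeeping. The points that demand care are the identification of $\mathfrak{h}$ with $i(\mathfrak{h})$ (so that $U$ restricts to $Q$ and every bracket with at least one $\mathfrak{h}$-entry stays in $\mathfrak{h}$), the signs produced by skew-symmetry (for instance $[\Phi(x), s(y)]_{\mathfrak{e}} = -\psi_y(\Phi(x))$), and the choice of the correct one of the two equivalent forms $U([U(a), b]_{\mathfrak{e}})$ or $U([a, U(b)]_{\mathfrak{e}})$ to generate each of the two equalities in (\ref{C}). Notably, neither the Jacobi identity nor the previously recorded relations (\ref{A})--(\ref{B}) are needed --- only the averaging conditions on $\mathfrak{e}$ and on $\mathfrak{g}$.
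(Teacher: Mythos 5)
Your proposal is correct and is essentially the paper's own argument: both proofs substitute the definitions $\Phi(x)=Us(x)-sP(x)$, $\psi_x h=[s(x),h]_{\mathfrak e}$, $\chi(x,y)=[s(x),s(y)]_{\mathfrak e}-s[x,y]_{\mathfrak g}$ and reduce everything to the averaging identities $[U(a),U(b)]_{\mathfrak e}=U([U(a),b]_{\mathfrak e})=U([a,U(b)]_{\mathfrak e})$ together with $Q=U|_{\mathfrak h}$ and $[P(x),P(y)]_{\mathfrak g}=P[P(x),y]_{\mathfrak g}$. The only difference is presentational (you equate several evaluations of the same bracket, while the paper expands the left-hand side minus the right-hand side and shows it vanishes), and your sign bookkeeping checks out.
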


\begin{proof}
For any $x \in \mathfrak{g}$ and $h \in \mathfrak{h}$, we have 
\begin{align*}
&~~\psi_{P(x)}Q(h) - Q(\psi_{P(x)}h) - Q[\Phi(x),h]_{\mathfrak{h}} + [\Phi(x), Q(h)]_{\mathfrak{h}}\\
&= [sP(x),Q(h)]_{\mathfrak{e}} -Q[sP(x),h]_{\mathfrak{e}} -Q[Us(x),h]_{\mathfrak{e}} +Q[sP(x),h]_{\mathfrak{e}} + [Us(x),Q(h)]_{\mathfrak{e}} - [sP(x),Q(h)]_{\mathfrak{e}}\\
&= -Q[Us(x), h]_{\mathfrak{e}} + [Us(x), U(h)]_{\mathfrak{e}} \quad (\text{as}~~Q = U \mid_{\mathfrak{h}})\\
&= 0 \quad (\text{as}~~U~~\text{is an embedding tensor}).
\end{align*}
We also have 
\begin{align*}
&~~\psi_{P(x)}Q(h) - Q(\psi_{x}Q(h)) + [\Phi(x),Q(h)]_{\mathfrak{h}}\\
&= [sP(x),Q(h)]_{\mathfrak{e}} - Q[s(x),Q(h)]_{\mathfrak{e}} + [Us(x),Q(h)]_{\mathfrak{e}} - [sP(x),Q(h)]_{\mathfrak{e}}\\
&=  - Q[s(x),Q(h)]_{\mathfrak{e}} + [Us(x),U(h)]_{\mathfrak{e}} \quad (\text{as}~~Q= U \mid_{\mathfrak{h}})\\
&= 0 \quad (\text{as}~~U~~\text{is an embedding tensor}).
\end{align*}
This proves the identities in (\ref{C}). To prove the identity (\ref{D}), we observe that 
\begin{align*}
&~~\chi(P(x), P(y)) -Q(\chi(P(x),y)) - \Phi[P(x),y]_{\mathfrak{g}} + \psi_{P(x)}\Phi(y) - \psi_{P(y)}\Phi(x) + Q (\psi_{y} \Phi(x) ) + [\Phi(x),\Phi(y)]_{\mathfrak{h}} \\
&= [sP(x),sP(y)]_{\mathfrak{e}} - s[P(x),P(y)]_{\mathfrak{g}} -Q([sP(x),s(y)]_{\mathfrak{e}} - s[P(x),y]_{\mathfrak{g}})  -Us[P(x),y]_{\mathfrak{g}} + sP[P(x),y]_{\mathfrak{g}}\\
&~~ + [sP(x),Us(y)]_{\mathfrak{e}} - [sP(x),sP(y)]_{\mathfrak{e}} -[sP(y),Us(x)]_{\mathfrak{e}} + [sP(y),sP(x)]_{\mathfrak{e}} + Q[s(y), Us(x)]_{\mathfrak{e}}\\
&~~-Q[s(y),sP(x)]_{\mathfrak{e}} + [Us(x),Us(y)]_{\mathfrak{e}} - [Us(x),sP(y)]_{\mathfrak{e}} - [sP(x), Us(y)]_{\mathfrak{e}} + [sP(x),sP(y)]_{\mathfrak{e}}\\
&= - s[P(x),P(y)]_{\mathfrak{g}} + Qs[P(x),y]_{\mathfrak{g}} -Us[P(x),y]_{\mathfrak{g}} + sP[P(x),y]_{\mathfrak{g}} +Q[s(y), Us(x)]_{\mathfrak{e}} + [Us(x),Us(y)]_{\mathfrak{e}}.
\end{align*}
The above expression vanishes as both $P$ and $U$ are embedding tensors and $Q= U|_{\mathfrak{h}}$. This completes the proof.
\end{proof}

\begin{remark}
Note that the identity (\ref{D}) can be equivalently described by 
\begin{equation}\label{D1}
\chi(P(x),P(y)) - Q(\chi(x,P(y))) -\Phi[x,P(y)]_{\mathfrak{g}} + \psi_{P(x)}\Phi(y) - \psi_{P(y)}\Phi(x)-Q(\psi_{x}\Phi(y)) + [\Phi(x),\Phi(y)]_{\mathfrak{h}}=0.
\end{equation}
\end{remark}

Let $s{'}: \mathfrak{g} \rightarrow \mathfrak{e}$ be any other section of (\ref{nab}). Let $\phi: \mathfrak{g} \rightarrow \mathfrak{h}$ be defined by $\phi(x) := s(x) - s{'}(x)$, for all $x \in \mathfrak{g}$. If $\chi{'}$, $\psi{'}$ and $\Phi{'}$ are the maps induced by the section $s{'}$, then for all $x,y \in \mathfrak{g}$ and $h \in \mathfrak{h}$, we have
\begin{align*}
\psi_{x}h-\psi_{x}{'}h &= [s(x),h]_{\mathfrak{e}} - [s{'}(x),h]_{\mathfrak{e}}= [\phi(x),h]_{\mathfrak{h}},\\
\chi(x,y) - \chi{'}(x,y) &= [s(x),s(y)]_{\mathfrak{e}} - s[x,y]_{\mathfrak{g}} - [s{'}(x),s{'}(y)]_{\mathfrak{e}} + s'[x,y]_{\mathfrak{g}}\\
                          &= [s{'}(x), (s-s{'})(y)]_{\mathfrak{e}} - [s{'}(y),(s-s{'})(x)]_{\mathfrak{e}}-(s-s{'})[x,y]_{\mathfrak{g}} + [(s-s{'})(x),(s-s{'})(y)]_{\mathfrak{h}}\\
                          &= \psi'_{x}\phi(y) - \psi'_{y}\phi(x) - \phi[x,y]_{\mathfrak{g}} + [\phi(x),\phi(y)]_{\mathfrak{h}},\\
\Phi(x) - \Phi{'}(x) &= (Us-sP)(x) - (Us{'}-s{'}P)(x)\\
                      &= U(s-s{'})(x) - (s-s{'})P(x)\\
                      &= Q\phi(x) - \phi P(x).
\end{align*}
The above discussion leads us to the following definition.

\begin{defn}
\begin{enumerate}
\item[(i)] Let $\mathfrak{g}_P$ and $\mathfrak{h}_Q$ be two averaging Lie algebras. A {\bf{non-abelian 2-cocycle}} of $\mathfrak{g}_P$ with values in $\mathfrak{h}_Q$ is a triple $(\chi, \psi, \Phi)$ of linear maps $\chi: \wedge^{2}\mathfrak{g} \rightarrow \mathfrak{h}$, $\psi: \mathfrak{g} \rightarrow \Der(\mathfrak{h})$ and $\Phi: \mathfrak{g} \rightarrow \mathfrak{h}$ satisfying the conditions (\ref{A}), (\ref{B}), (\ref{C}) and (\ref{D}).

\item[(ii)] Let $(\chi, \psi, \Phi)$ and $(\chi{'}, \psi{'}, \Phi{'})$ be two non-abelian 2-cocycles of $\mathfrak{g}_P$ with values in $\mathfrak{h}_Q$. They are said to be {\bf{equivalent}} if there exists a linear map $\phi: \mathfrak{g} \rightarrow \mathfrak{h}$ that satisfies
\begin{align}
\psi_{x}h-\psi'_{x}h =~& [\phi(x),h]_{\mathfrak{h}}, \label{E1} \\
\chi(x,y) - \chi'(x,y) =~& \psi'_{x}\phi(y) - \psi'_{y}\phi(x) - \phi[x,y]_{\mathfrak{g}} + [\phi(x),\phi(y)]_{\mathfrak{h}}, \label{E2} \\
\Phi(x) - \Phi'(x) =~&  Q\phi(x) - \phi P(x),~\text{for all}~~x, y \in \mathfrak{g}, h \in \mathfrak{h}. \label{E3}
\end{align}
\end{enumerate}
\end{defn}

Let $H^{2}_{\nab}(\mathfrak{g}_P, \mathfrak{h}_Q)$ be the set of all equivalence classes of non-abelian 2-cocycles of $\mathfrak{g}_P$ with values in $\mathfrak{h}_Q$. This is called the {\bf second non-abelian cohomology group} of the averaging Lie algebra $\mathfrak{g}_P$ with values in $\mathfrak{h}_Q$. The following result shows that the set of all equivalence classes of non-abelian extensions of $\mathfrak{g}_P$ by $\mathfrak{h}_Q$ are classified by the cohomology group $H^{2}_{\nab}(\mathfrak{g}_P, \mathfrak{h}_Q)$.

\begin{thm}\label{isom}
Let $\mathfrak{g}_P$ and $\mathfrak{h}_Q$ be two averaging Lie algebras. Then $\Ext_{\nab}(\mathfrak{g}_P, \mathfrak{h}_Q) \cong H^{2}_{\nab}(\mathfrak{g}_P,\mathfrak{h}_Q)$.
\end{thm}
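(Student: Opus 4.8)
The plan is to exhibit mutually inverse maps $\Theta \colon \Ext_{\nab}(\mathfrak{g}_P,\mathfrak{h}_Q) \to H^2_{\nab}(\mathfrak{g}_P,\mathfrak{h}_Q)$ and $\Xi$ in the other direction.

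First I would define $\Theta$. Given a non-abelian extension (\ref{nab}), choose a section $s$ (one always exists) and form the triple $(\chi,\psi,\Phi)$ as in (\ref{chi}), (\ref{Psi}), (\ref{Phi}). By (\ref{A}), (\ref{B}) and the Lemma above, $(\chi,\psi,\Phi)$ is a non-abelian $2$-cocycle of $\mathfrak{g}_P$ with values in $\mathfrak{h}_Q$. The change-of-section computation carried out right after the Lemma shows that a second section $s'$ produces a cocycle equivalent to $(\chi,\psi,\Phi)$ via $\phi = s - s'$, so the class $[(\chi,\psi,\Phi)] \in H^2_{\nab}(\mathfrak{g}_P,\mathfrak{h}_Q)$ is independent of the section. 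Finally, if $\tau \colon \mathfrak{e}_U \to \mathfrak{e}'_{U'}$ is an equivalence of extensions as in (\ref{abl-fig}), then $s' := \tau \circ s$ is a section of the second extension; since $\tau$ is a morphism of averaging Lie algebras with $\tau \circ i = i'$ and $p' \circ \tau = p$, the triple attached to $s'$ coincides with $(\chi,\psi,\Phi)$. Thus $\Theta$ is well defined on equivalence classes of extensions.

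Next I would define $\Xi$. Starting from a non-abelian $2$-cocycle $(\chi,\psi,\Phi)$, put $\mathfrak{e} := \mathfrak{g} \oplus \mathfrak{h}$ and define
\[
[(x,h),(y,k)]_{\mathfrak{e}} := \big( [x,y]_{\mathfrak{g}},\ \chi(x,y) + \psi_x(k) - \psi_y(h) + [h,k]_{\mathfrak{h}} \big), \qquad U(x,h) := \big( P(x),\ \Phi(x) + Q(h) \big).
\]
That $[~,~]_{\mathfrak{e}}$ is a Lie bracket follows by sorting the Jacobi identity according to the number of arguments lying in $\mathfrak{h}$: the purely $\mathfrak{g}$-valued part is exactly (\ref{B}), the part with one $\mathfrak{h}$-argument is (\ref{A}) together with $\psi_x \in \Der(\mathfrak{h})$, and the remaining parts follow from $\mathfrak{h}$ being a Lie algebra and $\psi_x$ being a derivation; skew-symmetry is immediate. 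That $U$ is an averaging operator, i.e. $[U(a),U(b)]_{\mathfrak{e}} = U([U(a),b]_{\mathfrak{e}})$ for $a,b \in \mathfrak{e}$, reduces on the $\mathfrak{g}$-component to $P$ being an averaging operator, and on the $\mathfrak{h}$-component to the identities (\ref{C}) and (\ref{D}) (using also the equivalent form (\ref{D1}) and that $Q$ is an averaging operator). With $i(h) := (0,h)$ and $p(x,h) := x$ one checks that $i$ and $p$ are morphisms of averaging Lie algebras, so $0 \to \mathfrak{h}_Q \xrightarrow{i} \mathfrak{e}_U \xrightarrow{p} \mathfrak{g}_P \to 0$ is a non-abelian extension. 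If $(\chi,\psi,\Phi)$ and $(\chi',\psi',\Phi')$ are equivalent via $\phi$, then $\tau(x,h) := (x,\ h + \phi(x))$ is, by (\ref{E1}), (\ref{E2}), (\ref{E3}), an isomorphism of averaging Lie algebras making the diagram (\ref{abl-fig}) commute; hence the extension class is independent of the chosen cocycle in its equivalence class, and $\Xi$ is well defined.

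Finally I would check that $\Theta$ and $\Xi$ are mutually inverse. For $\Theta \circ \Xi$: the extension $\Xi([(\chi,\psi,\Phi)])$ carries the canonical section $s(x) = (x,0)$, and substituting it into (\ref{chi}), (\ref{Psi}), (\ref{Phi}) returns precisely $(\chi,\psi,\Phi)$. For $\Xi \circ \Theta$: given an extension $\mathfrak{e}_U$ with section $s$ and induced cocycle $(\chi,\psi,\Phi)$, the map $\mathfrak{g} \oplus \mathfrak{h} \to \mathfrak{e}$, $(x,h) \mapsto s(x) + i(h)$, is a linear isomorphism that by construction intertwines the brackets and the averaging operators and commutes with $i$ and $p$, so it is an equivalence of extensions. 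The main obstacle is the verification that $U$ (and likewise the comparison map $\tau$) respects the averaging structure: the $\mathfrak{h}$-valued component of $[U(a),U(b)]_{\mathfrak{e}} = U([U(a),b]_{\mathfrak{e}})$ expands into a lengthy combination of $\chi$, $\psi$, $\Phi$, $Q$ and $[~,~]_{\mathfrak{h}}$ in which one must recognise (\ref{C}), (\ref{D}) and (\ref{D1}); this is, however, precisely dual to the computations already performed in the Lemma and in the change-of-section paragraph, so it goes through without surprises.
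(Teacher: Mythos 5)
Your proposal is correct and follows essentially the same route as the paper: extract a non-abelian $2$-cocycle from a section, check independence of the section and of the representative of the extension class, and conversely build the extension on $\mathfrak{g}\oplus\mathfrak{h}$ with the same bracket and the same operator $U(x,h)=(P(x),\Phi(x)+Q(h))$, with equivalent cocycles matched by $\tau(x,h)=(x,h+\phi(x))$. The only differences are cosmetic — you argue the Jacobi identity by degree rather than citing Fr\'egier, and you spell out the mutual-inverse check that the paper leaves as ``straightforward'' — so nothing further is needed.
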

\begin{proof}
Let $\mathfrak{e}_U$ and $\mathfrak{e{'}}_{U{'}}$ be two equivalent non-abelian extensions of $\mathfrak{g}_P$ by $\mathfrak{h}_Q$ (see Definition \ref{defn-nabe}(ii)). If $s: \mathfrak{g} \rightarrow \mathfrak{e}$ is a section of the map $p$, then it is easy to observe that the map $s{'} := \tau \circ s$ is a section of the map $p{'}$. Let $(\chi{'}, \psi{'}, \Phi{'})$ be the non-abelian 2-cocycle corresponding to the non-abelian extension $\mathfrak{e{'}}_{U{'}}$ and the section $s{'}$. Then we have
\begin{align*}
\chi{'}(x,y) &= [s{'}(x),s{'}(y)]_{\mathfrak{e}{'}} - s{'}[x,y]_{\mathfrak{g}}\\
              &= [\tau \circ s (x), \tau \circ s (y)]_{\mathfrak{e}{'}} - (\tau \circ s)[x,y]_{\mathfrak{g}}\\
              &= \tau ([s(x),s(y)]_{\mathfrak{e}}-s[x,y]_{\mathfrak{g}}) = \chi(x,y) \quad (\text{as}~~\tau|_{\mathfrak{h}}=\Id_{\mathfrak{h}}),
\end{align*}
\begin{align*}
\psi'_{x}(h) = [s{'}(x),h]_{\mathfrak{e}{'}}
                = [\tau \circ s(x), h]_{\mathfrak{e}{'}}
                = \tau ([s(x),h]_{\mathfrak{e}})
                = \psi_{x}(h) \quad (\text{as}~~\tau|_{\mathfrak{h}}=\Id_{\mathfrak{h}}),
\end{align*}
and
\begin{align*}
\Phi{'}(x) &= U{'}s{'}(x) -s{'}P(x) \\
            &= U^{'} (\tau \circ s(x)) - (\tau \circ s)P(x)\\
            &= \tau (Us(x) -sP(x)) = \Phi(x) \quad (\text{as}~~\tau|_{\mathfrak{h}}=\Id_{\mathfrak{h}}).
\end{align*}
Thus, we have $(\chi,\psi, \Phi)= (\chi{'},\psi{'}, \Phi{'})$. Hence they give rise to the same element in $H^{2}_{\nab}(\mathfrak{g}_P, \mathfrak{h}_Q)$. Therefore, there is a well-defined map $\Upsilon: \Ext_{\nab}(\mathfrak{g}_P, \mathfrak{h}_Q) \rightarrow H^{2}_{\nab}(\mathfrak{g}_P,\mathfrak{h}_Q)$. 

\medskip

Conversely, let $(\chi,\psi, \Phi)$ be a non-abelian 2-cocycle of $\mathfrak{g}_P$ with values in $\mathfrak{h}_Q$. Define $\mathfrak{e}:= \mathfrak{g} \oplus \mathfrak{h}$ with the bilinear skew-symmetric bracket
$$[(x,h),(y,k)]_{\mathfrak{e}} := ([x,y]_{\mathfrak{g}}, \psi_{x}k - \psi_{y}h + \chi(x,y) +[h,k]_{\mathfrak{h}}),~~\text{for}~~(x,h), (y,k) \in \mathfrak{e}.$$ It has been observed in \cite{fregier} (using the conditions (\ref{A}) and (\ref{B})) that the bracket $[~,~]_{\mathfrak{e}}$ satisfies the Jacobi identity. In other words, $(\mathfrak{e}, [~,~]_{\mathfrak{e}})$ is a Lie algebra. Further, we define a map $U: \mathfrak{e} \rightarrow \mathfrak{e}$ by 
$$U(x,h) := (P(x), Q(h) + \Phi(x)),~\text{for all}~~(x,h) \in \mathfrak{e}.$$ 
Then we have
\begin{align*}
&[U(x,h), U(y,k)]_{\mathfrak{e}}\\
&= \left[ (P(x), Q(h) + \Phi(x)),(P(y), Q(k) + \Phi(y))  \right]_\mathfrak{e} \\
                                &= \big( [P(x),P(y)]_{\mathfrak{g}}, ~\psi_{P(x)}Q(k) + \psi_{P(x)}\Phi(y) - \psi_{P(y)}Q(h) - \psi_{P(y)}\Phi(x) + \chi(P(x),P(y))\\
                                & \qquad \qquad \qquad \qquad  + [Q(h),Q(k)]_{\mathfrak{h}} + [Q(h), \Phi(y)]_{\mathfrak{h}} + [\Phi(x), Q(k)]_{\mathfrak{h}} + [\Phi(x), \Phi(y)]_{\mathfrak{h}} \big)\\
                                &=\big(P[P(x),y]_{\mathfrak{g}},~ Q(\psi_{P(x)}k - \psi_{y} Q(h) - \psi_{y} \Phi(x) + \chi(P(x),y) + [Q(y),k]_{\mathfrak{h}} + [\Phi(x),k]_{\mathfrak{h}})\\
                                &\qquad \qquad \qquad \qquad + \Phi[P(x),y]_{\mathfrak{g}} \big) \quad (\text{by}~~(\ref{C}))\\
                                &= U \big([P(x),y]_{\mathfrak{g}}, ~ \psi_{P(x)}k - \psi_{y} Q(h) - \psi_{y} \Phi(x) + \chi(P(x),y) + [Q(y),k]_{\mathfrak{h}} + [\Phi(x),k]_{\mathfrak{h}} \big)\\
                                &= U \big([(P(x), Q(h) + \Phi(x)), (y,k)]_{\mathfrak{e}} \big)\\
                                &= U([U(x,h), (y,k)]_{\mathfrak{e}}).
\end{align*}
Therefore, the map $U: \mathfrak{e} \rightarrow \mathfrak{e}$ is an averaging operator on the Lie algebra $\mathfrak{e}$. In other words, $\mathfrak{e}_U$ is an averaging Lie algebra. Further, it is easy to see that 
\begin{align*}
\xymatrix{0 \ar[r] & \mathfrak{h}_Q \ar[r]^{i} & \mathfrak{e}_U \ar[r]^{p}  &  \mathfrak{g}_P \ar[r] & 0}
\end{align*}
is a non-abelian extension of the averaging Lie algebra $\mathfrak{g}_P$ by $\mathfrak{h}_Q$, where $i(h) = (0, h)$ and $p(x,h)=x$, for all $(x,h) \in \mathfrak{e}$ and $h \in \mathfrak{h}$.

Next, let $(\chi, \psi, \Phi)$ and $(\chi{'}, \psi{'}, \Phi{'})$ be two equivalent non-abelian 2-cocycles, i.e., there exists a linear map $\phi: \mathfrak{g} \rightarrow \mathfrak{h}$ such that the identities (\ref{E1}), (\ref{E2}) and (\ref{E3}) hold. Let $\mathfrak{e}{'}_{U{'}}$ be the averaging Lie algebra induced by the 2-cocycle $(\chi{'}, \psi{'}, \Phi{'})$. Note that the Lie algebra $\mathfrak{e}{'}$ is the vector space $\mathfrak{g} \oplus \mathfrak{h}$ with the bracket 
$$[(x,h),(y,k)]_{\mathfrak{e}{'}} := ([x,y]_{\mathfrak{g}},~ \psi'_{x}k - \psi'_{y}h + \chi{'}(x,y) + [h,k]_{\mathfrak{h}}),~\text{for}~~(x,h), (y,k) \in \mathfrak{e}{'}.$$
Moreover, the map $U{'}: \mathfrak{e}{'} \rightarrow \mathfrak{e}{'}$ is given by $U{'}(x,h) := (P(x), Q(h) + \Phi{'}(x))$, {for all} $(x,h) \in \mathfrak{e}'.$
We now define a map $\tau: \mathfrak{g} \oplus \mathfrak{h} \rightarrow \mathfrak{g} \oplus \mathfrak{h}$ by $\tau(x,h) := (x, h + \phi(x))$, for all $(x,h) \in \mathfrak{g} \oplus \mathfrak{h}$. Then by a straightforward calculation, we have $\tau [(x,h),(y,k)]_{\mathfrak{e}} = [\tau (x,h), \tau (y,k)]_{\mathfrak{e}{'}}$. Further, 
\begin{align*}
(U \circ \tau) (x,h) &= U{'}(x,h + \phi(x))\\
                       &= (P(x), Q(h)+ Q(\phi(x)) + \Phi{'}(x))\\
                       &= (P(x), Q(h) + \phi(P(x)) + \Phi(x))  \quad (\text{by}~~(\ref{E3}))\\
                       &= \tau (P(x), Q(h) + \Phi(x))
                       = (\tau \circ U)(x,h).
\end{align*}
Hence the map $\tau: \mathfrak{e}_{U} \rightarrow \mathfrak{e}{'}_{U{'}}$ defines an equivalence between two non-abelian extensions. Therefore, we obtain a well-defined map 
$\Lambda: H^{2}_{\nab}(\mathfrak{g}_P, \mathfrak{h}_Q) \rightarrow \Ext_{\nab}(\mathfrak{g}_P, \mathfrak{h}_Q)$. Finally, it is straightforward to verify that the maps $\Upsilon$ and $\Lambda$ are inverses to each other. This completes the proof.
\end{proof}

\section{Automorphisms of averaging Lie algebras and the Wells map}\label{sec5}

Given a non-abelian extension of averaging Lie algebras, we consider the inducibility of a pair of averaging Lie algebra automorphisms. We show that the corresponding obstruction (which lies in the second non-abelian cohomology group) can be described as the image of a suitable Well map. Finally, we construct the Well short exact sequence that connects various automorphism groups and the second non-abelian cohomology group. 

Let $\mathfrak{g}_P$ and $\mathfrak{h}_Q$ be two averaging Lie algebras and 
\begin{align*}
\xymatrix{0 \ar[r] & \mathfrak{h}_Q \ar[r]^{i} & \mathfrak{e}_U \ar[r]^{p}  &  \mathfrak{g}_P \ar[r] & 0}
\end{align*} be a non-abelian extension of $\mathfrak{g}_P$ by $\mathfrak{h}_Q$. Let $\Aut_{\mathfrak{h}}(\mathfrak{e}_U)$ be the set of all averaging Lie algebra automorphisms $\gamma \in \Aut(\mathfrak{e}_{U})$ that satisfies $\gamma|_{\mathfrak{h}} \subset \mathfrak{h}$. For such a $\gamma \in \Aut_{\mathfrak{h}}(\mathfrak{e}_U)$, we have $\gamma|_{\mathfrak{h}} \in \Aut(\mathfrak{h}_Q)$. For any section $s: \mathfrak{g} \rightarrow \mathfrak{e}$ of the map $p$, we define a map $\overline{\gamma}: \mathfrak{g} \rightarrow \mathfrak{g}$ by $\overline{\gamma}(x) := p\gamma s(x)$, for $x \in \mathfrak{g}$. It is easy to verify that the map $\overline{\gamma}$ is independent of the choice of the section $s$ and $\overline{\gamma}$ is a bijection on $\mathfrak{g}$. Further, for any $x,y \in \mathfrak{g}$, we have
\begin{align*}
\overline{\gamma}([x,y]_{\mathfrak{g}}) = p\gamma(s[x,y]_{\mathfrak{g}}) &= p\gamma([s(x),s(y)]_{\mathfrak{e}}-\chi(x,y))\\
                                                                   &= p\gamma[s(x),s(y)]_{\mathfrak{e}} \quad (\text{as}~~\gamma|_{\mathfrak{h}} \subset \mathfrak{h}~~\text{and}~~p|_{\mathfrak{h}}=0)\\
                                                                   &= [p\gamma s(x),p\gamma s(y)]_{\mathfrak{g}}
                                                                =[\overline{\gamma}(x),\overline{\gamma}(y)]_{\mathfrak{g}}
\end{align*}
and 
\begin{align*}
(P \overline{\gamma} - \overline{\gamma}P)(x) &= (Pp\gamma s - p\gamma s P)(x)\\
                                    &= (pU\gamma s - p \gamma s P)(x)\\
                                    &= p\gamma (Us -sP)(x)
                                    = 0 \quad (\text{as}~~\gamma|_{\mathfrak{h}} \subset \mathfrak{h}~~\text{and}~~p|_{\mathfrak{h}}=0).
\end{align*}
This shows that the map $\overline{\gamma}: \mathfrak{g} \rightarrow \mathfrak{g}$ is an automorphism of the averaging Lie algebra $\mathfrak{g}_P$. In other words, $\overline{\gamma} \in \Aut(\mathfrak{g}_P)$. Therefore, we obtain a group homomorphism 
$$\Pi: \Aut_{\mathfrak{h}}(\mathfrak{e}_U) \rightarrow \Aut(\mathfrak{h}_Q) \times \Aut(\mathfrak{g}_P)~~\text{defined by}~~\Pi(\gamma) := (\gamma|_{\mathfrak{h}}, \overline{\gamma}),~\text{for all}~~\gamma \in \Aut_{\mathfrak{h}}(\mathfrak{e}_U).$$

\begin{defn}
A pair $(\beta, \alpha) \in \Aut(\mathfrak{h}_Q) \times \Aut(\mathfrak{g}_P)$ of averaging Lie algebra automorphisms is said to be {\bf{inducible}} if the pair $(\beta, \alpha)$ lies in the image of $\Pi$, i.e., $(\beta,\alpha)$ is inducible if there exists $\gamma \in \Aut_{\mathfrak{h}}(\mathfrak{e}_U)$ such that $\gamma|_{\mathfrak{h}}=\beta$ and $\overline{\gamma} = \alpha$.
\end{defn}

Our main aim in this section is to find a necessary and sufficient condition for a pair of averaging Lie algebra automorphisms $(\beta, \alpha) \in \Aut(\mathfrak{h}_Q) \times \Aut(\mathfrak{g}_P)$ to be inducible. The main theorem of this section will be stated in terms of the Wells map in the context of averaging Lie algebras.

Let $0 \rightarrow \mathfrak{h}_{Q} \xrightarrow{i}{\mathfrak{e}_{U}} \xrightarrow{p}{\mathfrak{g}_{P}} \rightarrow 0$ be a non-abelian extension of averaging Lie algebras. For any fixed section $s: \mathfrak{g} \rightarrow \mathfrak{e}$ of the map $p$, let $(\chi, \psi, \Phi)$ be the corresponding non-abelian 2-cocycle. Given any pair $(\beta, \alpha) \in \Aut(\mathfrak{h}_Q) \times \Aut(\mathfrak{g}_P)$ of averaging Lie algebra automorphisms, we define a new triple $(\chi_{(\beta, \alpha)}, \psi_{(\beta, \alpha)}, \Phi_{(\beta, \alpha)})$ of linear maps $\chi_{(\beta, \alpha)}: \wedge^{2}\mathfrak{g} \rightarrow \mathfrak{h}$, $\psi_{(\beta, \alpha)}: \mathfrak{g} \rightarrow \Der(\mathfrak{h})$ and $\Phi_{(\beta, \alpha)}: \mathfrak{g} \rightarrow \mathfrak{h}$ by 
\begin{equation}\label{N}
\chi_{(\beta, \alpha)}(x,y) := \beta \circ \chi(\alpha^{-1}(x), \alpha^{-1}(y)),~~(\psi_{(\beta, \alpha)})_{x}h := \beta (\psi_{\alpha^{-1}(x)}\beta^{-1}(h))~~\text{and}~~\Phi_{(\beta, \alpha)}(x) := \beta \Phi (\alpha^{-1}(x)),
\end{equation}
for $x,y \in \mathfrak{g}$ and $h \in \mathfrak{h}$. Then we have the following result.
\begin{lemma}
The triple $(\chi_{(\beta, \alpha)}, \psi_{(\beta, \alpha)}, \Phi_{(\beta, \alpha)})$ is a non-abelian $2$-cocycle.
\end{lemma}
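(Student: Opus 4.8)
The plan is to verify that the transported triple $(\chi_{(\beta,\alpha)}, \psi_{(\beta,\alpha)}, \Phi_{(\beta,\alpha)})$ satisfies each of the four defining conditions (\ref{A}), (\ref{B}), (\ref{C}), (\ref{D}) for a non-abelian $2$-cocycle. The underlying strategy is entirely formal: since $\alpha$ is an automorphism of $\mathfrak{g}_P$ (so $\alpha[x,y]_{\mathfrak{g}} = [\alpha x, \alpha y]_{\mathfrak{g}}$, $\alpha \circ P = P \circ \alpha$, and the same for $\alpha^{-1}$) and $\beta$ is an automorphism of $\mathfrak{h}_Q$ (so $\beta[h,k]_{\mathfrak{h}} = [\beta h, \beta k]_{\mathfrak{h}}$, $\beta \circ Q = Q \circ \beta$, and the same for $\beta^{-1}$), one substitutes the definitions in (\ref{N}) into each cocycle identity, conjugates through by $\beta$ on the outside and $\alpha^{-1}$ on the arguments, and reduces to the corresponding identity for $(\chi, \psi, \Phi)$ evaluated at the points $\alpha^{-1}(x), \alpha^{-1}(y), \alpha^{-1}(z)$ and the element $\beta^{-1}(h)$.

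Concretely, I would proceed in the order of the cocycle conditions. First, for (\ref{A}): expand $(\psi_{(\beta,\alpha)})_x (\psi_{(\beta,\alpha)})_y h - (\psi_{(\beta,\alpha)})_y (\psi_{(\beta,\alpha)})_x h - (\psi_{(\beta,\alpha)})_{[x,y]_{\mathfrak{g}}} h$; the inner $\beta^{-1}\beta = \Id$ cancellations collapse this to $\beta\big( \psi_{\alpha^{-1}x}\psi_{\alpha^{-1}y}\beta^{-1}h - \psi_{\alpha^{-1}y}\psi_{\alpha^{-1}x}\beta^{-1}h - \psi_{[\alpha^{-1}x,\alpha^{-1}y]_{\mathfrak{g}}}\beta^{-1}h \big)$, using $\alpha^{-1}[x,y]_{\mathfrak{g}} = [\alpha^{-1}x,\alpha^{-1}y]_{\mathfrak{g}}$; by (\ref{A}) for $(\chi,\psi,\Phi)$ this equals $\beta[\chi(\alpha^{-1}x,\alpha^{-1}y),\beta^{-1}h]_{\mathfrak{h}} = [\beta\chi(\alpha^{-1}x,\alpha^{-1}y), h]_{\mathfrak{h}} = [\chi_{(\beta,\alpha)}(x,y),h]_{\mathfrak{h}}$, which is (\ref{A}) for the transported triple. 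Condition (\ref{B}) is purely Lie-theoretic and handled identically using only that $\alpha^{-1}$ is a Lie algebra automorphism of $\mathfrak{g}$ and $\beta$ is linear and intertwines the action. Conditions (\ref{C}) and (\ref{D}) additionally involve $P$, $Q$, and $\Phi$; here one uses $\alpha^{-1}\circ P = P\circ\alpha^{-1}$ and $\beta\circ Q = Q\circ\beta$ to push $P$ and $Q$ past $\alpha^{-1}$ and $\beta$ respectively, so that e.g. $\Phi_{(\beta,\alpha)}(P(x)) = \beta\Phi(\alpha^{-1}P(x)) = \beta\Phi(P(\alpha^{-1}x))$ and $Q\Phi_{(\beta,\alpha)}(x) = Q\beta\Phi(\alpha^{-1}x) = \beta Q\Phi(\alpha^{-1}x)$, after which (\ref{C}) and (\ref{D}) for $(\chi,\psi,\Phi)$ at the argument $\alpha^{-1}x$ (and $\alpha^{-1}y$) finish the job.

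One should also record the trivial observations that $\chi_{(\beta,\alpha)}$ is skew-symmetric (it is $\beta$ composed with the skew-symmetric $\chi$ precomposed with $\alpha^{-1}\times\alpha^{-1}$) and that each $(\psi_{(\beta,\alpha)})_x$ is a derivation of $\mathfrak{h}$ (a conjugate of the derivation $\psi_{\alpha^{-1}x}$ by the Lie algebra automorphism $\beta$), so that the transported triple lies in the right cochain space before one even checks the cocycle identities.

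The computations are routine, so I do not expect a genuine obstacle; the only place demanding a little care is (\ref{D}) (equivalently (\ref{D1})), which has seven terms and mixes $P$ applied to brackets, $Q$ applied to $\psi$-terms, and the quadratic term $[\Phi,\Phi]_{\mathfrak{h}}$ — here one must be disciplined about the order in which $\alpha^{-1}$, $P$, $\beta$, and $Q$ are commuted so that the whole expression is seen to be exactly $\beta$ applied to the left-hand side of (\ref{D}) for $(\chi,\psi,\Phi)$ at $(\alpha^{-1}x,\alpha^{-1}y)$, which vanishes. Since all the verifications follow this single pattern, in the write-up I would carry out (\ref{A}) in detail as a template and then indicate that (\ref{B}), (\ref{C}), (\ref{D}) follow by the same conjugation argument, invoking the automorphism properties of $\alpha$ and $\beta$ at each step.
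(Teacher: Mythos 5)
Your proposal is correct and follows essentially the same route as the paper: substitute $\alpha^{-1}(x),\alpha^{-1}(y),\beta^{-1}(h)$ into the cocycle identities for $(\chi,\psi,\Phi)$, apply $\beta$, and use the automorphism properties $\alpha\circ P=P\circ\alpha$ and $\beta\circ Q=Q\circ\beta$ together with the bracket-preservation of $\alpha$ and $\beta$ to recognize the result as the cocycle identities for the transported triple. The only cosmetic difference is that you work out condition (\ref{A}) as the illustrative template while the paper displays condition (\ref{C}).
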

\begin{proof}
The triple $(\chi, \psi, \Phi)$ is a non-abelian $2$-cocycle implies that the identities (\ref{A}), (\ref{B}), (\ref{C}) and (\ref{D}) hold. In these identities, if we replace $x,y,h$ by $\alpha^{-1}(x),\alpha^{-1}(y),\beta^{-1}(h)$ respectively, we simply get the non-abelian $2$-cocycle conditions for the triple $(\chi_{(\beta, \alpha)}, \psi_{(\beta, \alpha)}, \Phi_{(\beta, \alpha)})$. For example, it follows from (\ref{C}) that
\begin{align*}
\beta(\psi_{P \alpha^{-1}(x)}Q\beta^{-1}(h)) = \beta Q(\psi_{P \alpha^{-1}(x)}\beta^{-1}(h)) + \beta Q [\Phi \alpha^{-1}(x), \beta^{-1}(h)]_{\mathfrak{h}} - \beta[\Phi \alpha^{-1}(x), Q \beta^{-1}(h)]_{\mathfrak{h}}\nonumber \\
                                             = \beta Q(\psi_{\alpha^{-1}(x)} Q\beta^{-1}(h)) - \beta[\Phi \alpha^{-1}(x), Q \beta^{-1}(h)]_{\mathfrak{h}}. \nonumber
\end{align*}
This can be written as (using (\ref{N}))
\begin{align*}
(\psi_{(\beta, \alpha)})_{P(x)}Q(h) = Q((\psi_{(\beta, \alpha)})_{P(x)}h) ~+~& Q[\Phi_{(\beta, \alpha)}(x), h]_{\mathfrak{h}} - [\Phi_{(\beta, \alpha)}(x), Q(h)]_{\mathfrak{h}}\nonumber \\
                                    =~& Q((\psi_{(\beta, \alpha)})_{x}Q(h)) - [\Phi_{(\beta, \alpha)}(x), Q(h)]_{\mathfrak{h}}.\nonumber
\end{align*}
This shows that the identity (\ref{C}) is also holds for the triple $(\chi_{(\beta, \alpha)}, \psi_{(\beta, \alpha)}, \Phi_{(\beta, \alpha)})$.
\end{proof}

Note that the non-abelian $2$-cocycle $(\chi, \psi, \Phi)$ and therefore  $(\chi_{(\beta, \alpha)}, \psi_{(\beta, \alpha)}, \Phi_{(\beta, \alpha)})$ depends on the chosen section $s$. We now define a map 
$\mathcal{W}: \Aut(\mathfrak{h}_Q) \times \Aut(\mathfrak{g}_P) \rightarrow H^{2}_{\nab}(\mathfrak{g}_P, \mathfrak{h}_Q)$ by 
\begin{equation}\label{W}
\mathcal{W}((\beta, \alpha)) = [  (\chi_{(\beta, \alpha)}, \psi_{(\beta, \alpha)}, \Phi_{(\beta, \alpha)}) -(\chi, \psi, \Phi) ],
\end{equation}
{the equivalence class of} $(\chi_{(\beta, \alpha)}, \psi_{(\beta, \alpha)}, \Phi_{(\beta, \alpha)}) -(\chi, \psi, \Phi).$
The map $\mathcal{W}$ is called the {\bf{Wells map}}. Note that the Wells map may not be a group homomorphism.

\begin{prop}
The Wells map $\mathcal{W}$ does not depend on the chosen section.
\end{prop}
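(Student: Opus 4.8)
The plan is to show that changing the section modifies the triple $(\chi_{(\beta,\alpha)},\psi_{(\beta,\alpha)},\Phi_{(\beta,\alpha)})-(\chi,\psi,\Phi)$ by a coboundary term of the type appearing in the equivalence relation (\ref{E1})--(\ref{E3}), so that its class in $H^2_{\nab}(\mathfrak{g}_P,\mathfrak{h}_Q)$ is unchanged. Concretely, let $s,s':\mathfrak{g}\to\mathfrak{e}$ be two sections and set $\phi(x):=s(x)-s'(x)$, a linear map $\mathfrak{g}\to\mathfrak{h}$. From the computation performed just before the definition of equivalent non-abelian $2$-cocycles we already know that the associated triples satisfy
\[
\psi_x h-\psi'_x h=[\phi(x),h]_{\mathfrak{h}},\quad \chi(x,y)-\chi'(x,y)=\psi'_x\phi(y)-\psi'_y\phi(x)-\phi[x,y]_{\mathfrak{g}}+[\phi(x),\phi(y)]_{\mathfrak{h}},\quad \Phi(x)-\Phi'(x)=Q\phi(x)-\phi P(x).
\]

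First I would feed these three identities through the twisting formulas (\ref{N}). Writing $\phi_{(\beta,\alpha)}:=\beta\circ\phi\circ\alpha^{-1}$, a direct substitution ($x\mapsto\alpha^{-1}(x)$, $h\mapsto\beta^{-1}(h)$, then applying $\beta$) shows that the twisted triples obey exactly the same relations with $\phi$ replaced by $\phi_{(\beta,\alpha)}$; here one uses that $\alpha$ commutes with $P$ and $\beta$ commutes with $Q$ (since $\alpha\in\Aut(\mathfrak{g}_P)$, $\beta\in\Aut(\mathfrak{h}_Q)$), which is what makes $Q\phi_{(\beta,\alpha)}(x)-\phi_{(\beta,\alpha)}P(x)=\beta(Q\phi(\alpha^{-1}x)-\phi P(\alpha^{-1}x))=\Phi_{(\beta,\alpha)}(x)-\Phi'_{(\beta,\alpha)}(x)$ come out correctly. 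Thus $(\chi_{(\beta,\alpha)},\psi_{(\beta,\alpha)},\Phi_{(\beta,\alpha)})$ and $(\chi'_{(\beta,\alpha)},\psi'_{(\beta,\alpha)},\Phi'_{(\beta,\alpha)})$ are equivalent $2$-cocycles, via $\phi_{(\beta,\alpha)}$.

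Next I would combine the two sets of relations. Subtracting the untwisted relations (for $\phi$) from the twisted ones (for $\phi_{(\beta,\alpha)}$) shows that
\[
\big(\chi_{(\beta,\alpha)},\psi_{(\beta,\alpha)},\Phi_{(\beta,\alpha)}\big)-\big(\chi,\psi,\Phi\big)
\quad\text{and}\quad
\big(\chi'_{(\beta,\alpha)},\psi'_{(\beta,\alpha)},\Phi'_{(\beta,\alpha)}\big)-\big(\chi',\psi',\Phi'\big)
\]
differ precisely by the coboundary associated with the linear map $\phi_{(\beta,\alpha)}-\phi:\mathfrak{g}\to\mathfrak{h}$: the $\psi$-components differ by $h\mapsto[(\phi_{(\beta,\alpha)}-\phi)(x),h]_{\mathfrak h}$, the $\Phi$-components differ by $Q(\phi_{(\beta,\alpha)}-\phi)(x)-(\phi_{(\beta,\alpha)}-\phi)P(x)$, and the $\chi$-components differ by the corresponding bilinear expression — one has to be a little careful because the $\chi$-part of the equivalence relation is not linear in $\phi$ (it has the bracket term $[\phi(x),\phi(y)]_{\mathfrak h}$ and a $\psi'$-twisted part), so the cancellation has to be checked by grouping the quadratic and the $\psi$-linear terms. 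I expect this bookkeeping in the $\chi$-component — tracking which $\psi$ (the twisted one or not) appears and reconciling the cross terms $[\phi(x),\phi_{(\beta,\alpha)}(y)]_{\mathfrak h}$ — to be the one genuinely fiddly point; everything else is a mechanical substitution. Once it is checked, the two differences represent the same class in $H^2_{\nab}(\mathfrak{g}_P,\mathfrak{h}_Q)$, so $\mathcal{W}((\beta,\alpha))$ is independent of the section, completing the proof.
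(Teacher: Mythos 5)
Your proposal is correct and follows essentially the same route as the paper: both use $\phi:=s-s'$, observe that the twisted cocycles are equivalent via $\beta\phi\alpha^{-1}$, and conclude that the two differences are equivalent via $\beta\phi\alpha^{-1}-\phi$. The bookkeeping in the $\chi$-component that you flag as the genuinely fiddly point is likewise left as an unverified "it is easy to verify" in the paper, so you have identified the same gap the authors chose to elide rather than introduced a new one.
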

\begin{proof}
Let $s{'}$ be any other section of the map $p$ and let $(\chi{'}, \psi{'}, \Phi{'})$ be the corresponding non-abelian $2$-cocycle. We have seen that the non-abelian $2$-cocycles $(\chi, \psi, \Phi)$ and $(\chi{'}, \psi{'}, \Phi{'})$ are equivalent by the map $\phi := s- s{'}$. Using this, it is easy to verify that the non-abelian $2$-cocycles $(\chi_{(\beta, \alpha)}, \psi_{(\beta, \alpha)}, \Phi_{(\beta, \alpha)})$ and $(\chi'_{(\beta, \alpha)}, \psi'_{(\beta, \alpha)}, \Phi{'}_{(\beta, \alpha)})$ are equivalent by the map $\beta \phi \alpha^{-1}$. Combining these results, we observe that the $2$-cocycles $(\chi_{(\beta, \alpha)}, \psi_{(\beta, \alpha)}, \Phi_{(\beta, \alpha)}) -(\chi, \psi, \Phi)$ and $(\chi'_{(\beta, \alpha)}, \Psi'_{(\beta, \alpha)}, \Phi'_{(\beta, \alpha)}) -(\chi{'}, \psi{'}, \Phi{'})$ are equivalent by the map $\beta \phi \alpha^{-1} - \phi$. Therefore, their corresponding equivalence calsses in $H^{2}_{\nab}(\mathfrak{g}_P, \mathfrak{h}_Q)$ are same. In other words, the map $\mathcal{W}$ does not depend on the chosen section.
\end{proof}

We are now in a position to prove the main result of this section. 
\begin{thm}\label{main}
Let $0 \rightarrow \mathfrak{h}_{Q} \xrightarrow{i}{\mathfrak{e}_{U}} \xrightarrow{p}{\mathfrak{g}_{P}} \rightarrow 0$
be a non-abelian extension of averaging Lie algebras. A pair $(\beta, \alpha) \in \Aut (\mathfrak{h}_Q) \times \Aut(\mathfrak{g}_P)$ of averaging Lie algebra automorphisms is inducible if and only if $\mathcal{W}((\beta, \alpha))=0$.
\end{thm}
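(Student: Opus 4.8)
The plan is to prove both directions by unpacking what it means for $\gamma \in \Aut_{\mathfrak{h}}(\mathfrak{e}_U)$ to induce the pair $(\beta,\alpha)$, and matching this against the cocycle-difference formula defining $\mathcal{W}((\beta,\alpha))$. Fix a section $s:\mathfrak{g}\to\mathfrak{e}$ and let $(\chi,\psi,\Phi)$ be the associated non-abelian $2$-cocycle. The identification $\mathfrak{e}\cong\mathfrak{g}\oplus\mathfrak{h}$ via $x+h\mapsto(x,h)$ (where we write $s(x)+h$) lets us express any linear $\gamma:\mathfrak{e}\to\mathfrak{e}$ with $\gamma(\mathfrak{h})\subseteq\mathfrak{h}$ as $\gamma(s(x)+h) = s(\alpha(x)) + \beta(h) + \lambda(x)$ for $\beta=\gamma|_{\mathfrak{h}}$, $\alpha=\overline{\gamma}$, and a uniquely determined linear map $\lambda:\mathfrak{g}\to\mathfrak{h}$.

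First I would establish the ``only if'' direction. Suppose such a $\gamma$ exists. I would impose the three conditions that make $\gamma$ a morphism of averaging Lie algebras: (a) $\gamma$ preserves the Lie bracket $[~,~]_{\mathfrak{e}}$, and (b) $\gamma\circ U = U\circ\gamma$. Writing $[~,~]_{\mathfrak{e}}$ and $U$ in terms of $(\chi,\psi,\Phi)$ through the explicit formulas
\begin{align*}
[(x,h),(y,k)]_{\mathfrak{e}} &= ([x,y]_{\mathfrak{g}},\ \psi_x k - \psi_y h + \chi(x,y) + [h,k]_{\mathfrak{h}}),\\
U(x,h) &= (P(x),\ Q(h) + \Phi(x)),
\end{align*}
and substituting $\gamma(x,h) = (\alpha(x),\beta(h)+\lambda(x))$, condition (a) will translate, after comparing $\mathfrak{h}$-components and using that $\beta\in\Aut(\mathfrak{h})$, into exactly the relations (\ref{E1}) and (\ref{E2}) with $\phi$ replaced by $\lambda\circ\alpha^{-1}$, relating $(\chi,\psi,\Phi)$ to $(\chi_{(\beta,\alpha)},\psi_{(\beta,\alpha)},\Phi_{(\beta,\alpha)})$ (the $\mathfrak{g}$-components give nothing new because $\alpha\in\Aut(\mathfrak{g}_P)$). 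Condition (b), comparing $\mathfrak{h}$-components of $\gamma U(x,h)$ and $U\gamma(x,h)$, will similarly yield (\ref{E3}) for the same $\phi=\lambda\circ\alpha^{-1}$. Hence $(\chi_{(\beta,\alpha)},\psi_{(\beta,\alpha)},\Phi_{(\beta,\alpha)})$ and $(\chi,\psi,\Phi)$ are equivalent non-abelian $2$-cocycles, so their difference is a coboundary and $\mathcal{W}((\beta,\alpha)) = 0$.

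For the ``if'' direction I would reverse this: assuming $\mathcal{W}((\beta,\alpha))=0$, there is $\phi:\mathfrak{g}\to\mathfrak{h}$ realizing the equivalence (\ref{E1})--(\ref{E3}) between $(\chi_{(\beta,\alpha)},\ldots)$ and $(\chi,\psi,\Phi)$. Then define $\gamma:\mathfrak{e}\to\mathfrak{e}$ by $\gamma(x,h) := (\alpha(x),\ \beta(h) + \phi(\alpha(x)))$ and verify it is a well-defined linear isomorphism with $\gamma(\mathfrak{h})\subseteq\mathfrak{h}$, $\gamma|_{\mathfrak{h}}=\beta$, and $\overline{\gamma}=\alpha$; the equivalence identities guarantee that $\gamma$ respects $[~,~]_{\mathfrak{e}}$ and commutes with $U$, exactly by running the computations of the previous paragraph backwards. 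This shows $(\beta,\alpha)$ is inducible. The main obstacle I anticipate is purely bookkeeping: correctly tracking the substitution $\phi \leftrightarrow \lambda\circ\alpha^{-1}$ and verifying that the $\mathfrak{h}$-component of the bracket-compatibility of $\gamma$ is genuinely equivalent to (\ref{E1}) and (\ref{E2}) simultaneously (the $h$-independent part giving (\ref{E2}), the $h$-linear part giving (\ref{E1})), and likewise that the averaging-compatibility splits cleanly into (\ref{E3}) plus identities automatically satisfied because $\alpha,\beta$ are averaging automorphisms and $(\chi,\psi,\Phi)$ already satisfies (\ref{A})--(\ref{D}). No single step is deep, but the algebra must be organized so that each cocycle condition appears exactly once.
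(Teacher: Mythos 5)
Your proposal is correct and follows essentially the same route as the paper: the paper's proof also defines $\phi = \gamma s\alpha^{-1} - s$ (which is exactly your $\lambda\circ\alpha^{-1}$) in the forward direction and checks that it realizes the equivalence (\ref{E1})--(\ref{E3}) between $(\chi_{(\beta,\alpha)},\psi_{(\beta,\alpha)},\Phi_{(\beta,\alpha)})$ and $(\chi,\psi,\Phi)$, and in the converse it constructs $\gamma(h+s(x)) := \beta(h)+\phi\alpha(x)+s(\alpha(x))$, which is precisely your formula, then verifies bijectivity, bracket preservation, $\gamma\circ U = U\circ\gamma$, and $\Pi(\gamma)=(\beta,\alpha)$. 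The only cosmetic difference is that the paper carries out the forward computation directly with brackets in $\mathfrak{e}$ rather than first transporting everything to $\mathfrak{g}\oplus\mathfrak{h}$.
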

\begin{proof}
Let $(\beta, \alpha)  \in \Aut (\mathfrak{h}_Q) \times \Aut(\mathfrak{g}_P)$ be indicible. Therefore, there exists an automorphism $\gamma \in \Aut_{\mathfrak{h}}(\mathfrak{e}_{U})$ such that $\gamma|_{\mathfrak{h}} = \beta$ and $p \gamma s = \alpha$ (for any section $s$). For any $x \in \mathfrak{g}$, we observe that
$$p(\gamma s - s \alpha)(x) = \alpha(x) - \alpha (x) =0.$$
Therefore, $(\gamma s - s \alpha)(x) \in \Ker(p)= \Im(i) \cong \mathfrak{h}$. We define a map $\phi: \mathfrak{g} \rightarrow \mathfrak{h}$ by $\phi(x) := (\gamma s - s \alpha) \alpha^{-1}(x) = \gamma s \alpha^{-1}(x) - s(x)$, for $x \in \mathfrak{g}$. Then we have
\begin{align*}
(\psi_{(\beta, \alpha)})_{x} h - \psi_{x} h &= \beta (\psi_{\alpha^{-1}(x)}\beta^{-1}(h)) -\psi_{x} h\\
                                            &= \beta [s \alpha^{-1}(x), \beta^{-1}(h)]_{\mathfrak{e}} - [s(x),h]_{\mathfrak{e}}\\
                                            &= [\gamma s \alpha^{-1}(x),h]_{\mathfrak{e}} - [s(x),h]_{\mathfrak{e}} \quad (\text{as}~~\beta = \gamma|_{\mathfrak{h}})\\
                                            &= [\phi(x),h]_{\mathfrak{h}},~\text{for any}~~x \in \mathfrak{g}~~\text{and}~~h \in \mathfrak{h}.
\end{align*}
Similarly, by direct calculations, we observe that 
\begin{align*}
\chi_{(\beta, \alpha)}(x,y) - \chi(x,y)=~& \psi_{x} \phi(y) - \psi_{y} \phi(x) - \phi ([x,y]_{\mathfrak{g}}) + [\phi(x), \phi(y)]_{\mathfrak{h}},\\
\Phi_{(\beta, \alpha)}(x) - \Phi (x)=~& Q \phi(x) - \phi P(x),~~\text{for any}~~x,y \in \mathfrak{g}.
\end{align*}
Here $(\chi, \psi, \Phi)$ is the non-abelian $2$-cocycle induced by any fixed section $s$. It follows from the above observation that the non-abelian $2$-cocycles $(\chi_{(\beta, \alpha)}, \psi_{(\beta, \alpha)}, \Phi_{(\beta, \alpha)})$ and $(\chi, \psi, \Phi)$ are equivalent by the map $\phi = \gamma s \alpha^{-1} -s$. Hence we have
$$\mathcal{W}((\beta, \alpha)) := [(\chi_{(\beta, \alpha)}, \psi_{(\beta, \alpha)}, \Phi_{(\beta, \alpha)})  -  (\chi, \psi, \Phi)]=0.$$

Conversely, let $(\beta, \alpha) \in \Aut(\mathfrak{h}_Q) \times \Aut(\mathfrak{g}_P)$ be a pair of averaging Lie algebra automorphisms such that $\mathcal{W}((\beta, \alpha))=0$. Let $s: \mathfrak{g} \rightarrow \mathfrak{e}$ be any section of the map $p$ and $(\chi, \psi, \Phi)$ be the non-abelian $2$-cocycle induced by the section $s$. Since $\mathcal{W}((\beta, \alpha))=0$, it follows that the non-abelian $2$-cocycles $(\chi_{(\beta, \alpha)}, \psi_{(\beta, \alpha)}, \Phi_{(\beta, \alpha)})$ and $(\chi, \psi, \Phi)$ are equivalent, say by the map $\phi: \mathfrak{g} \rightarrow \mathfrak{h}$. Note that, $s$ is a section of the map $p$ implies that any element $e \in \mathfrak{e}$ can be written as $e = h + s(x)$, for some $h \in \mathfrak{h}$ and $x \in \mathfrak{g}$. We now define a map $\gamma : \mathfrak{e} \rightarrow \mathfrak{e}$ by 
$$\gamma(e) = \gamma (h + s(x)):=(\beta(h) + \phi \alpha(x)) + s(\alpha(x)),~\text{for}~~e = h+ s(x) \in \mathfrak{e}.$$

\medskip

\noindent \underline{\bf{Step I.}} ($\gamma$ is bijective) Suppose $\gamma(h+s(x))=0$. Then it follows that $s(\alpha(x))=0$. As both $s$ and $\alpha$ are injective maps, we have $x=0$. Using this in the definition of $\gamma$, we have $\beta(h) =0$, which implies $h=0$. This proves that $\gamma$ is injective. Finally, let $e = h + s(x) \in \mathfrak{e}$ be any arbitrary element. We consider the element $e{'}= (\beta^{-1}(h) - \beta^{-1} \phi(x)) + s (\alpha^{-1}(x)) \in \mathfrak{e}$. Then we have $$\gamma(e{'})= (h - \phi(x) +\phi(x)) + s(x) = h + s(x) =e.$$
This shows that $\gamma$ is also surjective. Hence $\gamma$ is bijective. 

\medskip

\noindent \underline{\bf{Step II.}} ($\gamma: \mathfrak{e}_{U} \rightarrow \mathfrak{e}_{U}$ is an automorphism of averaging Lie algebras) Take any two elements $e_1 = h_1 + s(x_1)$ and $e_2 = h_2 + s(x_2)$ of the vector space $\mathfrak{e}$. We have 
\begin{align*}
[\gamma(e_1), \gamma(e_2)]_{e} &= [\beta(h_1) + \phi \alpha (x_1) + s(\alpha(x_1)), \beta(h_2) + \phi \alpha (x_2) + s(\alpha(x_2))]_{\mathfrak{e}}\\
                               &= [\beta(h_1), \beta(h_2)]_{\mathfrak{e}} + \underbrace{[\beta(h_1), \phi \alpha (x_2)]_{\mathfrak{e}}}_{A} + \underbrace{[\beta(h_1), s(\alpha(x_2))]_{\mathfrak{e}}}_{B} + \underbrace{[\phi \alpha (x_1),\beta(h_2)]_{\mathfrak{e}}}_{C}\\
                               &~~ + \underbrace{[\phi \alpha (x_1),\phi \alpha (x_2)]_{\mathfrak{e}}}_{D} + \underbrace{[\phi \alpha (x_1),s(\alpha(x_2))]_{\mathfrak{e}}}_{E} + \underbrace{[s(\alpha(x_1)), \beta(h_2)]_{\mathfrak{e}}}_{F} + \underbrace{[s(\alpha(x_1)), \phi \alpha (x_2)]_{\mathfrak{e}}}_{G}\\
                               &~~+ \underbrace{[s(\alpha(x_1)),s(\alpha(x_2))]_{\mathfrak{e}}}_{H}\\
                               &= [\beta(h_1), \beta(h_2)]_{\mathfrak{e}} \underbrace{- \beta(\psi_{x_2}h_1) + \psi_{\alpha(x_2)}\beta(h_1)}_{A} \underbrace{- \psi_{\alpha(x_2)}\beta(h_1)}_{B} + \underbrace{\beta(\psi_{x_1}h_2) - \psi_{\alpha(x_1)}\beta(h_2)}_{C}\\
                               &~~+ \underbrace{\beta(\chi(x_1,x_2))- \chi(\alpha(x_1), \alpha(x_2))-\psi_{\alpha(x_1)}\phi \alpha(x_2) + \psi_{\alpha(x_2)} \phi \alpha(x_1) + \phi \alpha([x_1,x_2]_{\mathfrak{g}})}_{D}\\
                               &~~ \underbrace{ - \psi_{\alpha(x_2)}\phi \alpha(x_1)}_{E} + \underbrace{\psi_{\alpha(x_1)}\beta(h_2)}_{F} + \underbrace{\psi_{\alpha(x_1)} \phi \alpha(x_2)}_{G} + \underbrace{\chi(\alpha(x_1), \alpha(x_2))+ s[\alpha(x_1), \alpha (x_2)]_{\mathfrak{g}}}_{H}\\
                               &= \beta ([h_1,h_2]_{\mathfrak{h}} - \psi_{x_2}h_1 + \psi_{x_1} h_2 + \chi(x_1, x_2)) + \lambda([x_1,x_2]_{\mathfrak{g}}) + s[\alpha(x_1), \alpha(x_2)]_{\mathfrak{g}}\\
                               &= \beta ([h_1,h_2]_{\mathfrak{h}} + [h_1, s(x_2)]_{\mathfrak{e}} + [s(x_1),h_2]_{\mathfrak{e}} + \chi(x_1, x_2)) + \lambda([x_1,x_2]_{\mathfrak{g}}) + s \alpha([x_1, x_2]_{\mathfrak{g}})\\
                               &= \gamma ([h_1,h_2]_{\mathfrak{h}} + [h_1, s(x_2)]_{\mathfrak{e}} + [s(x_1),h_2]_{\mathfrak{e}} + \chi(x_1, x_2) + s[x_1,x_2]_{\mathfrak{g}})\\
                               &= \gamma ([h_1,h_2]_{\mathfrak{h}} + [h_1, s(x_2)]_{\mathfrak{e}} + [s(x_1),h_2]_{\mathfrak{e}} + [s(x_1),s(x_2)]_{\mathfrak{e}})\\
                               &= \gamma ([h_1 + s(x_1), h_2 + s(x_2)]_{\mathfrak{e}})\\
                               &= \gamma ([e_1, e_2]_{\mathfrak{e}}).
\end{align*}
This shows that $\gamma$ preserves the Lie bracket. It is straightforward to verify that $\gamma \circ U = U \circ \gamma$. Hence $\gamma : \mathfrak{e}_{U} \rightarrow \mathfrak{e}_{U}$ is an automorphism of averaging Lie algebras.

\medskip

\noindent \underline{\bf{Step III.}} For any $h \in \mathfrak{h}$ and $x \in \mathfrak{g}$, we observe that 
$$\gamma(h)= \gamma (h + s(0)) = \beta (h)~~\text{and}$$
$$(p \gamma s)(x) = p \gamma (0 + s(x)) = p (\phi \alpha(x) + s(\alpha(x)))= ps(\alpha(x))= \alpha(x).$$
This shows that $\Pi(\gamma) = (\gamma|_{\mathfrak{h}}, p\gamma s) = (\beta, \alpha)$. Hence the pair $(\beta, \alpha)$ is inducible.
\end{proof}

\begin{remark}
It follows from the previous theorem that $\mathcal{W}((\beta, \alpha))$ is an obstruction for the inducibility of the pair $(\beta, \alpha)$. Therefore, the images of the Wells map are the obstructions for the inducibility of pair of automorphisms in $\Aut(\mathfrak{h}_Q) \times \Aut(\mathfrak{g}_P)$. Further, it follows that if the Wells map $\mathcal{W}$ is the trivial map, then any pair of averaging Lie algebra automorphisms in $\Aut(\mathfrak{h}_Q) \times \Aut(\mathfrak{g}_P)$ is inducible.
\end{remark}

Note that the Wells map defined in (\ref{W}) generalizes the well-known Wells map from the Lie algebra context. In the Lie algebra case, the Wells map fits into an exact sequence (known as the Wells exact sequence). We will now generalize the Wells exact sequence in the context of averaging Lie algebras. 

\medskip

Let $0 \rightarrow \mathfrak{h}_{Q} \xrightarrow{i}{\mathfrak{e}_{U}} \xrightarrow{p}{\mathfrak{g}_{P}} \rightarrow 0$
be a non-abelian extension of averaging Lie algebras. We define a subgroup $\Aut_{\mathfrak{h}}^{\mathfrak{h},\mathfrak{g}}(\mathfrak{e}_{U}) \subset \Aut_{\mathfrak{h}}(\mathfrak{e}_U)$ 
by $$\Aut_{\mathfrak{h}}^{\mathfrak{h},\mathfrak{g}}(\mathfrak{e}_{U}) := \lbrace \gamma \in \Aut_{\mathfrak{h}}(\mathfrak{e}_{U}) \mid \Pi (\gamma) = (\Id_{\mathfrak{h}}, \Id_{\mathfrak{g}})   \rbrace.$$

\begin{thm}\label{w-e-s}
With the above notation, there is an exact sequence 
\begin{align}\label{star}
1 \rightarrow \Aut_{\mathfrak{h}}^{\mathfrak{h},\mathfrak{g}}(\mathfrak{e}_{U}) \xrightarrow{\iota}  \Aut_{\mathfrak{h}}(\mathfrak{e}_{U})  \xrightarrow{\Pi}   \Aut(\mathfrak{h}_Q) \times \Aut(\mathfrak{g}_P) \xrightarrow{\mathcal{W}}  H^{2}_{\nab}(\mathfrak{g}_P, \mathfrak{h}_Q).
\end{align}

\end{thm}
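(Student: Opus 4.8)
The plan is to establish exactness of the sequence \eqref{star} spot by spot, observing at the outset that all the genuine content has already been absorbed into Theorem \ref{main} and the preceding propositions, so that the proof reduces to three essentially formal checks.

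\textbf{Exactness at the first two terms.} First I would note that $\iota$ is simply the inclusion of the subgroup $\Aut_{\mathfrak{h}}^{\mathfrak{h},\mathfrak{g}}(\mathfrak{e}_{U})$ into $\Aut_{\mathfrak{h}}(\mathfrak{e}_{U})$, hence injective; this gives exactness at the leftmost spot. For exactness at $\Aut_{\mathfrak{h}}(\mathfrak{e}_{U})$, recall that $\Pi$ is a group homomorphism, so $\operatorname{Ker}(\Pi) = \{ \gamma \in \Aut_{\mathfrak{h}}(\mathfrak{e}_{U}) \mid \Pi(\gamma) = (\Id_{\mathfrak{h}}, \Id_{\mathfrak{g}}) \}$; but this set is, by definition, precisely $\Aut_{\mathfrak{h}}^{\mathfrak{h},\mathfrak{g}}(\mathfrak{e}_{U}) = \operatorname{Im}(\iota)$. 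In particular $\Aut_{\mathfrak{h}}^{\mathfrak{h},\mathfrak{g}}(\mathfrak{e}_{U})$ is automatically normal in $\Aut_{\mathfrak{h}}(\mathfrak{e}_{U})$, being a kernel, so the sequence makes sense as a sequence of groups up to this point. Both verifications are thus tautological once the objects are unwound.

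\textbf{Exactness at $\Aut(\mathfrak{h}_Q) \times \Aut(\mathfrak{g}_P)$.} This is the only place carrying content. Since the Wells map $\mathcal{W}$ need not be a group homomorphism, exactness here is meant in the pointed-set sense: it asserts that $\operatorname{Im}(\Pi) = \mathcal{W}^{-1}(\{0\})$, where $0$ denotes the zero class of $H^{2}_{\nab}(\mathfrak{g}_P, \mathfrak{h}_Q)$ regarded as the distinguished basepoint. Unwinding the definition of $\Pi$, a pair $(\beta, \alpha)$ lies in $\operatorname{Im}(\Pi)$ precisely when it is inducible; and by Theorem \ref{main}, $(\beta, \alpha)$ is inducible if and only if $\mathcal{W}((\beta, \alpha)) = 0$. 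Comparing the two statements yields $\operatorname{Im}(\Pi) = \mathcal{W}^{-1}(\{0\})$, which is the desired exactness.

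I do not expect a real obstacle here: Theorem \ref{main} does all the work, and the remaining content of Theorem \ref{w-e-s} is bookkeeping. The one point I would be careful to state explicitly in the writeup is precisely the pointed-set interpretation of exactness at the non-group term $\Aut(\mathfrak{h}_Q) \times \Aut(\mathfrak{g}_P)$, exactly as in the classical Wells exact sequence for Lie algebras, so that the reader is not misled into expecting $\mathcal{W}$ to be a homomorphism.
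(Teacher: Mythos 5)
Your proposal is correct and follows essentially the same route as the paper's proof: injectivity of the inclusion for the first spot, the tautological identification $\Ker(\Pi)=\Aut_{\mathfrak{h}}^{\mathfrak{h},\mathfrak{g}}(\mathfrak{e}_{U})=\Im(\iota)$ for the second, and both directions of Theorem \ref{main} to get $\Im(\Pi)=\Ker(\mathcal{W})$ at the third. Your explicit remark about interpreting exactness at the non-group term in the pointed-set sense is a welcome clarification that the paper leaves implicit.
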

 \begin{proof}
The sequence (\ref{star}) is exact at the first term as the inclusion map $\iota :  \Aut_{\mathfrak{h}}^{\mathfrak{h},\mathfrak{g}}(\mathfrak{e}_{U}) \rightarrow \Aut_{\mathfrak{h}}(\mathfrak{e}_U)$ is injective. Next, we shall show that the sequence is exact at the second term. Let $\gamma \in \Ker(\Pi)$. Then we have $\gamma|_{\mathfrak{h}} = \Id_{\mathfrak{h}}$ and $p \gamma s = \Id_{\mathfrak{g}}$ (for any give section $s$). Therefore, $\gamma \in \Aut_{\mathfrak{h}}^{\mathfrak{h},\mathfrak{g}}(\mathfrak{e}_{U})$. Conversely, if $\gamma \in \Aut_{\mathfrak{h}}^{\mathfrak{h},\mathfrak{g}}(\mathfrak{e}_{U})$, then $\Pi (\gamma) = (\gamma|_{\mathfrak{h}}, p \gamma s) = (\Id_{\mathfrak{h}}, \Id_{\mathfrak{g}})$, i.e., $\gamma \in \Ker(\Pi)$. This proves that $\Ker(\Pi) = \Aut_{\mathfrak{h}}^{\mathfrak{h},\mathfrak{g}}(\mathfrak{e}_{U}) = \Im(\iota)$.

Finally, we take an element $(\beta, \alpha) \in \Ker(\mathcal{W})$. Then it follows from Theorem \ref{main} that the pair $(\beta, \alpha)$ is inducible. Hence there exists an automorphism $\gamma \in \Aut_{\mathfrak{h}}(\mathfrak{e}_{U})$ such that $\Pi (\gamma) = (\beta, \alpha)$. This shows that $(\beta, \alpha) \in \Im(\Pi)$. Conversely, let $(\beta, \alpha) \in \Im(\Pi)$, i.e., $(\beta, \alpha)$ is inducible. Then again by Theorem \ref{main}, we have $\mathcal{W}((\beta, \alpha)) =0$. Hence $(\beta, \alpha) \in \Ker(\mathcal{W})$ and therefore, we obtain $\Ker(\mathcal{W}) = \Im (\Pi)$. This shows that the sequence (\ref{star}) is also exact in the third term and this completes the proof.
\end{proof}

\section{Abelian extensions of averaging Lie algebras: a particular case}\label{sec6}

In this final section, we consider abelian extensions of averaging Lie algebras as a particular case of non-abelian extensions. Therefore, we obtain new results for abelian extensions of averaging Lie algebras that were not studied in the literature.

Let $\mathfrak{g}_P$ be an averaging Lie algebra and $\mathfrak{h}_Q$ be a representation of it (see Definition \ref{rep}). Consider $\mathfrak{h}_Q$ as an averaging Lie algebra, where $\mathfrak{h}$ is equipped with the abelian Lie bracket. Let 
\begin{align}\label{ab}
\xymatrix{0 \ar[r] & \mathfrak{h}_Q \ar[r]^{i} & \mathfrak{e}_U \ar[r]^{p}  &  \mathfrak{g}_P \ar[r] & 0}
\end{align}
be a short exact sequence of averaging Lie algebras. Note that, for any section $s: \mathfrak{g} \rightarrow \mathfrak{e}$ of the map $p$, there is an induced $\mathfrak{g}$-representation on $\mathfrak{h}$ with the action map $\psi: \mathfrak{g} \rightarrow \End(\mathfrak{h})$ given by $\psi_{x}h := [s(x),h]_{\mathfrak{e}}$, for $x \in \mathfrak{g}, h \in \mathfrak{h}$. It is easy to verify that this representation does not depend on the choice of section. Moreover, with this induced $\mathfrak{g}$-representation, $\mathfrak{h}_Q$ becomes a representation of the averaging Lie algebra $\mathfrak{g}_P$.

An extension (\ref{ab}) is said to be an {\bf abelian extension} if this new representation on $\mathfrak{h}_Q$ coincides with the given one. Equivalence between two abelian extensions can be defined similarly as in the case of non-abelian extensions. Let $\Ext_{\ab}(\mathfrak{g}_P, \mathfrak{h}_Q)$ be the set of all equivalence classes of abelian extensions of $\mathfrak{g}_P$ by the given representation $\mathfrak{h}_Q$. Then Theorem \ref{isom} can be rephrased by the following. 

\begin{thm}
Let $\mathfrak{g}_P$ be an averaging Lie algebra and $\mathfrak{h}_Q$ be a representation of it. Then there is a bijection $\Ext_{\ab}(\mathfrak{g}_P, \mathfrak{h}_Q) \cong H^{2}_{\ALie}(\mathfrak{g}_P, \mathfrak{h}_Q)$.
\end{thm}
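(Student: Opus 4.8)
The plan is to obtain this statement as a direct specialization of Theorem \ref{isom}. View the given representation $\mathfrak{h}_Q$ as an averaging Lie algebra by equipping $\mathfrak{h}$ with the abelian bracket $[~,~]_{\mathfrak{h}} = 0$; Theorem \ref{isom} then yields $\Ext_{\nab}(\mathfrak{g}_P, \mathfrak{h}_Q) \cong H^2_{\nab}(\mathfrak{g}_P, \mathfrak{h}_Q)$. An extension (\ref{ab}) is abelian exactly when its induced $\mathfrak{g}$-action on $\mathfrak{h}$ is the prescribed $\psi$; under the bijection of Theorem \ref{isom} (and using the explicit inverse $\Lambda$ built in its proof) these correspond precisely to classes of non-abelian $2$-cocycles $(\chi, \psi, \Phi)$ whose middle term is the fixed representation map $\psi \colon \mathfrak{g} \to \End(\mathfrak{h}) = \Der(\mathfrak{h})$. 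So it suffices to identify this restricted set of $2$-cocycles, modulo the restricted equivalence, with $H^2_{\ALie}(\mathfrak{g}_P, \mathfrak{h}_Q)$.

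First I would unwind the non-abelian cocycle conditions (\ref{A})--(\ref{D}) under the hypotheses $[~,~]_{\mathfrak{h}} = 0$ and $\psi$ fixed. Condition (\ref{A}) becomes the statement that $\psi$ is a Lie-algebra representation, and condition (\ref{C}) becomes $\psi_{P(x)}Q(h) = Q(\psi_{P(x)}h) = Q(\psi_x Q(h))$; both hold automatically because $(\mathfrak{h}, \psi)$ is a representation of $\mathfrak{g}$ and $\mathfrak{h}_Q$ a representation of $\mathfrak{g}_P$ in the sense of Definition \ref{rep}. What survives is (\ref{B}), which is precisely $\delta^2_{\Lie}(\chi) = 0$, together with (\ref{D}), whose left-hand side (with $[\Phi(x),\Phi(y)]_{\mathfrak{h}} = 0$) is
\begin{align*}
\chi(P(x),P(y)) - Q(\chi(P(x),y)) - \Phi[P(x),y]_{\mathfrak{g}} + \psi_{P(x)}\Phi(y) - \psi_{P(y)}\Phi(x) + Q(\psi_y\Phi(x)).
\end{align*}
A short computation identifies this with $\big(\partial^1_{\Leib}(\Phi) + \chi\circ P^{\otimes 2} - Q\chi\circ(P\otimes\Id)\big)(x,y)$, i.e.\ with the $\Hom(\mathfrak{g},\mathfrak{h})$-component of $\delta^2_{\ALie}((\chi,\Phi))$. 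Hence $(\chi,\psi,\Phi) \mapsto (\chi,\Phi)$ is a bijection from restricted non-abelian $2$-cocycles onto $Z^2_{\ALie}(\mathfrak{g}_P, \mathfrak{h}_Q)$.

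Next I would match the two equivalences. A restricted equivalence is a linear map $\phi \colon \mathfrak{g} \to \mathfrak{h}$ satisfying (\ref{E1})--(\ref{E3}); with $[~,~]_{\mathfrak{h}} = 0$, (\ref{E1}) merely re-expresses $\psi = \psi'$, while (\ref{E2}) becomes $\chi - \chi' = \delta^1_{\Lie}(\phi)$ and (\ref{E3}) becomes $\Phi - \Phi' = Q\phi - \phi P$, which is the $\Hom(\mathfrak{g},\mathfrak{h})$-component of $\delta^1_{\ALie}(\phi)$. Thus $(\chi,\Phi) - (\chi',\Phi') = \delta^1_{\ALie}(\phi)$, so the bijection above descends to one between restricted equivalence classes and $H^2_{\ALie}(\mathfrak{g}_P, \mathfrak{h}_Q)$. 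Composing with the restriction of Theorem \ref{isom} to abelian extensions gives the desired bijection $\Ext_{\ab}(\mathfrak{g}_P, \mathfrak{h}_Q) \cong H^2_{\ALie}(\mathfrak{g}_P, \mathfrak{h}_Q)$.

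The main obstacle is entirely bookkeeping: one must verify that the reduced form of the non-abelian constraint (\ref{D}) agrees term by term, signs included, with the Loday--Pirashvili part $\partial^1_{\Leib}$ of the averaging-Lie-algebra coboundary $\delta^2_{\ALie}$, and likewise that (\ref{E3}) is the $n = 1$ instance of $\delta^1_{\ALie}$ (where the $\partial^0_{\Leib}$ summand is vacuous). The remaining identifications are routine comparisons of definitions.
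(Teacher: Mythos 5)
Your proposal is correct and follows essentially the same route as the paper: the paper also obtains the result by specializing Theorem \ref{isom}, observing that for an abelian $\mathfrak{h}$ the conditions (\ref{A}), (\ref{C}) reduce to the representation axioms, that (\ref{B}) and the reduced (\ref{D}) are exactly $\delta^2_{\ALie}((\chi,\Phi))=0$, and that the restricted equivalences become cohomologous $2$-cocycles. The only difference is that the paper states these reductions without computation, whereas you verify the sign-by-sign match of (\ref{D}) with $\partial^1_{\Leib}(\Phi) + \chi\circ P^{\otimes 2} - Q\chi\circ(P\otimes\Id)$ explicitly, which is indeed the content one must check.
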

\begin{proof}
Note that an abelian extension is a (non-abelian) extension in which $\mathfrak{h}$ has abelian Lie algebra structure and the induced $\mathfrak{g}$-representation on $\mathfrak{h}$ coincides with the prescribed one. On the other hand, if we impose these conditions on a non-abelian $2$-cocycle $(\chi, \psi, \Phi)$ (i.e., conditions (\ref{A}), (\ref{B}),  (\ref{C}), (\ref{D})), we simply get that $\psi$ is the prescribed $\mathfrak{g}$-representation on $\mathfrak{h}$ and $(\chi, \Phi)$ is a $2$-cocycle in the cochain complex $\lbrace C^{\bullet}_{\ALie} (\mathfrak{g}_P, \mathfrak{h}_Q), \delta _{\ALie} \rbrace$ defined in Section \ref{sec2}. Thus, abelian extensions give rise to $2$-cocycles in $\lbrace C^{\bullet}_{\ALie} (\mathfrak{g}_P, \mathfrak{h}_Q), \delta _{\ALie} \rbrace$. It is also similar to verifying that equivalent abelian extensions correspond to cohomologous $2$-cocycles. Hence we get the bijection $\Ext_{\ab}(\mathfrak{g}_P, \mathfrak{h}_Q) \cong H^{2}_{\ALie}(\mathfrak{g}_P, \mathfrak{h}_Q)$.
\end{proof}

Let $\mathfrak{g}_P$ be an averaging Lie algebra, $\mathfrak{h}_Q$ be a representation and  $0 \rightarrow \mathfrak{h}_{Q} \xrightarrow{i}{\mathfrak{e}_{U}} \xrightarrow{p}{\mathfrak{g}_{P}} \rightarrow 0$
be an abelian extension of $\mathfrak{g}_P$ by $\mathfrak{h}_Q$. Let $\psi: \mathfrak{g} \rightarrow \End(\mathfrak{h})$ denotes the $\mathfrak{g}$-representation on $\mathfrak{h}$. We define a subgroup $C_{\psi} \subset \Aut(\mathfrak{h}_Q) \times \Aut(\mathfrak{g}_P)$ by 
$$C_{\psi} := \lbrace (\beta, \alpha) \in \Aut(\mathfrak{h}_Q) \times \Aut(\mathfrak{g}_P) \mid \beta (\psi_{x}h)= \psi_{\alpha(x)} \beta (h),~\text{for all}~~x \in \mathfrak{g}, h \in \mathfrak{h} \rbrace.$$
The space $C_{\psi}$ is called the space of compatible pairs of automorphisms. Note that, if $s: \mathfrak{g} \rightarrow \mathfrak{e}$ is a section of $p$ and $(\chi, \Phi)$ is the $2$-cocycle induced by $s$, then for any $(\beta, \alpha) \in \Aut(\mathfrak{h}_Q) \times \Aut(\mathfrak{g}_P)$, the pair $(\chi_{(\beta, \alpha)}, \Phi_{(\beta, \alpha)})$ may not be a $2$-cocycle. However, if $(\beta, \alpha) \in C_{\psi}$, then $(\chi_{(\beta, \alpha)}, \Phi_{(\beta, \alpha)})$ turns out to be a $2$-cocycle. Therefore, we can define a map $\mathcal{W}: C_{\psi} \rightarrow H^{2}_{\ALie}(\mathfrak{g}_P, \mathfrak{h}_Q)$ by 
\begin{align}\label{wells-ab}
\mathcal{W}((\beta, \alpha)) := [ (\chi_{(\beta, \alpha)}, \Phi_{(\beta, \alpha)}) - (\chi, \Phi) ],~\text{for }~~(\beta, \alpha) \in C_{\psi}.
\end{align}
Like the non-abelian case, the Wells map here does not depend on the chosen section. Thus, Theorem \ref{main} in the case of abelian extensions can be rephrased as follows.

\begin{thm}
Let $0 \rightarrow \mathfrak{h}_{Q} \xrightarrow{i}{\mathfrak{e}_{U}} \xrightarrow{p}{\mathfrak{g}_{P}} \rightarrow 0$
be an abelian extension of an averaging Lie algebra $\mathfrak{g}_{P}$ by a representation $\mathfrak{h}_Q$. Then a pair $(\beta, \alpha) \in \Aut(\mathfrak{h}_Q) \times \Aut(\mathfrak{g}_P)$  is inducible if and only if $(\beta,\alpha) \in C_{\psi}$ and $\mathcal{W}((\beta, \alpha))=0$.
\end{thm}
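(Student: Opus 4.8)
The plan is to deduce this statement from Theorem \ref{main} together with the analogue of the Wells-map construction in the abelian setting, keeping track carefully of the extra constraint encoded by $C_\psi$. The key point is that the abelian situation differs from the fully non-abelian one in exactly two ways: the Lie bracket on $\mathfrak h$ is trivial, and the $\mathfrak g$-module structure $\psi$ on $\mathfrak h$ is \emph{fixed} rather than allowed to vary with the section. So if $(\beta,\alpha)$ is inducible by some $\gamma\in\Aut_{\mathfrak h}(\mathfrak e_U)$, I must first show $(\beta,\alpha)\in C_\psi$, and then show $\mathcal W((\beta,\alpha))=0$; conversely, given $(\beta,\alpha)\in C_\psi$ with $\mathcal W((\beta,\alpha))=0$, I must build the inducing automorphism $\gamma$.

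First I would prove the easy direction. Suppose $\gamma\in\Aut_{\mathfrak h}(\mathfrak e_U)$ satisfies $\gamma|_{\mathfrak h}=\beta$ and $\overline\gamma=p\gamma s=\alpha$. Fix a section $s$ and let $(\chi,\psi,\Phi)$ be the induced non-abelian $2$-cocycle; since the extension is abelian, $\psi$ is the prescribed module structure. Computing $\psi_{\alpha(x)}\beta(h)=[s\alpha(x),\beta(h)]_{\mathfrak e}$ and using that $\gamma s\alpha^{-1}(x)-s(x)\in\mathfrak h$ together with $[\mathfrak h,\mathfrak h]_{\mathfrak e}=0$, one gets $\psi_{\alpha(x)}\beta(h)=[\gamma s(x),\gamma(h)]_{\mathfrak e}=\gamma[s(x),h]_{\mathfrak e}=\beta(\psi_x h)$, so $(\beta,\alpha)\in C_\psi$. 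Then the argument in the first half of the proof of Theorem \ref{main} applies verbatim (with $\phi:=\gamma s\alpha^{-1}-s$), giving that $(\chi_{(\beta,\alpha)},\Phi_{(\beta,\alpha)})$ and $(\chi,\Phi)$ are cohomologous in $C^\bullet_{\ALie}(\mathfrak g_P,\mathfrak h_Q)$; hence $\mathcal W((\beta,\alpha))=0$.

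For the converse, assume $(\beta,\alpha)\in C_\psi$ and $\mathcal W((\beta,\alpha))=0$. The condition $(\beta,\alpha)\in C_\psi$ is precisely what makes $(\chi_{(\beta,\alpha)},\Phi_{(\beta,\alpha)})$ a genuine $2$-cocycle in $\{C^\bullet_{\ALie}(\mathfrak g_P,\mathfrak h_Q),\delta_{\ALie}\}$ (this was noted in the paragraph defining $\mathcal W$), so $\mathcal W((\beta,\alpha))=0$ yields a linear $\phi:\mathfrak g\to\mathfrak h$ with $\chi_{(\beta,\alpha)}-\chi=\delta$-coboundary data and $\Phi_{(\beta,\alpha)}-\Phi=Q\phi-\phi P$, together with the module-twist identity $(\psi_{(\beta,\alpha)})_x h-\psi_x h=[\phi(x),h]_{\mathfrak h}=0$ — which holds automatically here since $\mathfrak h$ is abelian and $(\beta,\alpha)\in C_\psi$ forces $\psi_{(\beta,\alpha)}=\psi$. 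With this $\phi$ in hand I define $\gamma(h+s(x)):=(\beta(h)+\phi\alpha(x))+s(\alpha(x))$ exactly as in the proof of Theorem \ref{main}, and I would just cite that proof: Steps I–III there show $\gamma$ is bijective, is an automorphism of the averaging Lie algebra $\mathfrak e_U$ (the bracket computation specializes by dropping all $[\,\cdot\,,\,\cdot\,]_{\mathfrak h}$ terms, and $\gamma U=U\gamma$ is checked directly), and satisfies $\Pi(\gamma)=(\beta,\alpha)$. Hence $(\beta,\alpha)$ is inducible.

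The only genuinely new content beyond invoking Theorem \ref{main} is bookkeeping: one must verify that the non-abelian $2$-cocycle conditions (\ref{A})--(\ref{D}), when specialized to $[\,\cdot\,,\,\cdot\,]_{\mathfrak h}=0$ and $\psi$ fixed, reduce to the $\ALie$-cocycle conditions on the pair $(\chi,\Phi)$, and that the obstruction lands in $H^2_{\ALie}$ rather than $H^2_{\nab}$. I expect the main obstacle to be purely expository — making precise why $C_\psi$ is forced on the inducible side (the abelian bracket is what collapses $\psi_{(\beta,\alpha)}$ and $\psi$ together) and why, symmetrically, dropping the $C_\psi$ hypothesis would make $(\chi_{(\beta,\alpha)},\Phi_{(\beta,\alpha)})$ fail to be a cocycle, so that $\mathcal W$ is only defined on $C_\psi$ in the first place. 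No new calculation is required; everything follows by specializing the non-abelian machinery.
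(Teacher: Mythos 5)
Your proposal is correct and follows essentially the same route as the paper, which obtains this theorem by specializing Theorem \ref{main}: the containment of $\mathrm{Im}(\Pi)$ in $C_\psi$ gives the necessity of $(\beta,\alpha)\in C_\psi$, and for pairs in $C_\psi$ the abelianness of $\mathfrak{h}$ collapses the non-abelian equivalence conditions (\ref{E1})--(\ref{E3}) to cohomologousness of $(\chi,\Phi)$-type cocycles in $\{C^{\bullet}_{\ALie}(\mathfrak{g}_P,\mathfrak{h}_Q),\delta_{\ALie}\}$, so the vanishing of the abelian Wells map is equivalent to that of the non-abelian one. Your write-up in fact supplies more of the bookkeeping (e.g.\ the verification that an inducing $\gamma$ forces $(\beta,\alpha)\in C_\psi$) than the paper, which states the result as a direct rephrasing of Theorem \ref{main}.
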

Since the image of the map $\Pi: \Aut_{\mathfrak{h}}(\mathfrak{e}_{U}) \rightarrow \Aut(\mathfrak{h}_Q) \times \Aut(\mathfrak{g}_P)$ lies in $C_{\psi} \subset \Aut(\mathfrak{h}_Q) \times \Aut(\mathfrak{g}_P)$, we have the following Wells short exact sequence for abelian extensions.

\begin{thm}
Let  $0 \rightarrow \mathfrak{h}_{Q} \xrightarrow{i}{\mathfrak{e}_{U}} \xrightarrow{p}{\mathfrak{g}_{P}} \rightarrow 0$
be an abelian extension of an averaging Lie algebra $\mathfrak{g}_{P}$ by a representation $\mathfrak{h}_Q$. Then there is an exact sequence 
\begin{align}\label{exact-ab}
1 \rightarrow  \Aut_{\mathfrak{h}}^{\mathfrak{h},\mathfrak{g}}(\mathfrak{e}_{U}) \xrightarrow{\iota}  \Aut_{\mathfrak{h}}(\mathfrak{e}_{U}) \xrightarrow{\Pi}   C_{\psi} \xrightarrow{\mathcal{W}}  H^{2}_{\nab}(\mathfrak{g}_P, \mathfrak{h}_Q).
\end{align}
\end{thm}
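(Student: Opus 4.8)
The plan is to mirror the proof of Theorem~\ref{w-e-s} almost verbatim, since an abelian extension is a special case of a non-abelian one (with $\mathfrak{h}$ carrying the zero bracket and the prescribed $\mathfrak{g}$-action as $\psi$), and to feed in the preceding inducibility theorem in place of Theorem~\ref{main} at the last term.

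First I would dispose of exactness at the first two spots. Exactness of (\ref{exact-ab}) at $\Aut_{\mathfrak{h}}^{\mathfrak{h},\mathfrak{g}}(\mathfrak{e}_{U})$ is just the injectivity of the inclusion $\iota$. Exactness at $\Aut_{\mathfrak{h}}(\mathfrak{e}_{U})$ is immediate from the definition of the subgroup $\Aut_{\mathfrak{h}}^{\mathfrak{h},\mathfrak{g}}(\mathfrak{e}_{U})$: for $\gamma \in \Aut_{\mathfrak{h}}(\mathfrak{e}_U)$ one has $\gamma \in \Ker(\Pi)$ iff $\gamma|_{\mathfrak{h}} = \Id_{\mathfrak{h}}$ and $p\gamma s = \Id_{\mathfrak{g}}$ (independently of the section $s$), which says precisely $\Pi(\gamma) = (\Id_{\mathfrak{h}}, \Id_{\mathfrak{g}})$, i.e. $\gamma \in \Aut_{\mathfrak{h}}^{\mathfrak{h},\mathfrak{g}}(\mathfrak{e}_{U}) = \Im(\iota)$. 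No new computation is needed here.

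Next I would record the one genuinely new point, namely that $\Pi$ factors through $C_\psi$, so that $\mathcal{W}$ as defined on $C_\psi$ in (\ref{wells-ab}) can be post-composed with $\Pi$. Given $\gamma \in \Aut_{\mathfrak{h}}(\mathfrak{e}_U)$, put $\beta = \gamma|_{\mathfrak{h}}$ and $\alpha = \overline{\gamma}$, and write $\gamma s(x) = s(\alpha(x)) + \phi_0(x)$ with $\phi_0(x) \in \mathfrak{h}$. Since $\psi_x h = [s(x), h]_{\mathfrak{e}}$ and $\gamma$ preserves the Lie bracket, $\beta(\psi_x h) = [\gamma s(x), \beta(h)]_{\mathfrak{e}} = [s(\alpha(x)), \beta(h)]_{\mathfrak{e}} + [\phi_0(x), \beta(h)]_{\mathfrak{e}}$, and the last bracket vanishes because $\mathfrak{h}$ carries the abelian bracket; hence $\beta(\psi_x h) = \psi_{\alpha(x)}\beta(h)$, i.e. $(\beta,\alpha) \in C_\psi$. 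This is exactly where abelianness is used; it is also what makes $(\chi_{(\beta,\alpha)}, \Phi_{(\beta,\alpha)})$ a $2$-cocycle in $C^\bullet_{\ALie}(\mathfrak{g}_P, \mathfrak{h}_Q)$, so that $\mathcal{W}((\beta,\alpha))$ is well defined.

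Finally, for exactness at $C_\psi$ I would invoke the preceding theorem: a pair $(\beta, \alpha) \in \Aut(\mathfrak{h}_Q) \times \Aut(\mathfrak{g}_P)$ is inducible iff $(\beta,\alpha) \in C_\psi$ and $\mathcal{W}((\beta,\alpha)) = 0$. If $(\beta,\alpha) = \Pi(\gamma)$, then by the previous paragraph $(\beta,\alpha) \in C_\psi$, and $(\beta,\alpha)$ is inducible, so $\mathcal{W}((\beta,\alpha)) = 0$; thus $\Im(\Pi) \subseteq \Ker(\mathcal{W})$. Conversely, if $(\beta,\alpha) \in C_\psi$ with $\mathcal{W}((\beta,\alpha)) = 0$, that theorem yields $\gamma \in \Aut_{\mathfrak{h}}(\mathfrak{e}_U)$ with $\Pi(\gamma) = (\beta,\alpha)$, so $(\beta,\alpha) \in \Im(\Pi)$; hence $\Ker(\mathcal{W}) = \Im(\Pi)$ and the sequence is exact. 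I do not expect any serious obstacle: once the abelian inducibility theorem is available the whole argument is bookkeeping. The only thing worth flagging is the target group in (\ref{exact-ab}): it is written $H^2_{\nab}(\mathfrak{g}_P, \mathfrak{h}_Q)$, whereas the abelian Wells map (\ref{wells-ab}) is valued in $H^2_{\ALie}(\mathfrak{g}_P, \mathfrak{h}_Q)$; under the identification of an abelian extension with the corresponding non-abelian one these two cohomology groups agree on the relevant classes, and I would make that identification explicit at the start of the proof.
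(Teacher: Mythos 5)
Your proposal is correct and follows essentially the same route as the paper, which in fact gives no separate proof here beyond the remark that $\Im(\Pi)\subseteq C_\psi$ and an implicit appeal to Theorem~\ref{w-e-s} together with the abelian inducibility theorem; your verification that abelianness of $\mathfrak{h}$ forces $(\gamma|_{\mathfrak{h}},\overline{\gamma})\in C_\psi$ is precisely the one new ingredient the paper relies on. Your flag about the target group is also well taken: the codomain in (\ref{exact-ab}) should read $H^2_{\ALie}(\mathfrak{g}_P,\mathfrak{h}_Q)$ to match the abelian Wells map (\ref{wells-ab}), as you note.
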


\medskip

An abelian extension $0 \rightarrow \mathfrak{h}_Q \xrightarrow{i} \mathfrak{e}_U \xrightarrow{p} \rightarrow \mathfrak{g}_P \rightarrow 0$ of an averaging Lie algebra $\mathfrak{g}_P$ by a representation $\mathfrak{h}_Q$ is said to be {\bf split} if there exist a section $s: \mathfrak{g} \rightarrow \mathfrak{e}$ which is a morphism of averaging Lie algebras. Then the averaging Lie algebra $\mathfrak{e}_U$ is isomorphic to the averaging Lie algebra $(\mathfrak{g} \oplus \mathfrak{h})_{P \oplus Q}$, where the Lie bracket on $\mathfrak{g} \oplus \mathfrak{h}$ is given by the semi-direct product 
\begin{align*}
    [(x, h), (y, k) ]_\ltimes := ( [x, y]_\mathfrak{g}, \psi_x k - \psi_y h), \text{ for } (x, h), (y, k) \in \mathfrak{g} \oplus \mathfrak{h}.
\end{align*}
Thus, if $(\chi, \Phi)$ is the $2$-cocycle corresponding to the above split abelian extension induced by the section $s$, then
\begin{align*}
    \chi (x, y) = [s(x) , s(y)]_\mathfrak{e} - s [x, y]_\mathfrak{g} = 0 ~~~ \text{ and } ~~~ \Phi(x) = (Us- sP) (x) = 0, \text{ for } x, y \in \mathfrak{g}.
\end{align*}
Therefore, it turns out that the Wells map defined in (\ref{wells-ab}) vanishes identically. Hence, for split abelian extensions, the exact sequence (\ref{exact-ab}) takes the following form
\begin{align}\label{new-exact-ab}
    1 \rightarrow \mathrm{Aut}_\mathfrak{h}^{\mathfrak{h}, \mathfrak{g}} (\mathfrak{e}_U) \xrightarrow{ \iota } \mathrm{Aut}_\mathfrak{h} (\mathfrak{e}_U) \xrightarrow{ \Pi } C_\psi \rightarrow 1.
\end{align}
Note that we can define a group homomorphism $\rho: C_\psi  \rightarrow \mathrm{Aut}_\mathfrak{h} (\mathfrak{e}_U)$ by $\rho ((\beta, \alpha)) (x, h) = \big(  \alpha(x), \beta(h) \big)$, for $(\beta, \alpha) \in C_\psi$ and $(x, h) \in \mathfrak{g} \oplus \mathfrak{h} \cong \mathfrak{e}$. Further, we have $(\Pi \circ \rho) (\beta, \alpha) = \big(  \rho(\beta, \alpha)|_\mathfrak{h}, \overline{   \rho(\beta, \alpha)  }   \big) = (\beta, \alpha)$ as $ \overline{   \rho(\beta, \alpha)  } = p \circ \rho (\beta, \alpha) \circ s = \alpha$. This shows that (\ref{new-exact-ab}) is a split exact sequence in the category of groups. As a consequence, we obtain the following result.

\begin{prop}\label{last-prop}
    Let $0 \rightarrow \mathfrak{h}_Q \xrightarrow{i} \mathfrak{e}_U \xrightarrow{p} \rightarrow \mathfrak{g}_P \rightarrow 0$ be a split abelian extension of an averaging Lie algebra $\mathfrak{g}_P$ by a representation $\mathfrak{h}_Q$. Then as groups,
    \begin{align*}
        \mathrm{Aut}_\mathfrak{h} (\mathrm{e}_U) \cong C_\psi \ltimes \mathrm{Aut}_\mathfrak{h}^{\mathfrak{h}, \mathfrak{g}} (\mathfrak{e}_U),
    \end{align*}
    where the right-hand side is the semi-direct product of groups.
\end{prop}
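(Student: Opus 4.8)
The plan is to extract from the preceding discussion the split short exact sequence of groups \eqref{new-exact-ab} and then recognize its splitting as precisely the data of a semi-direct product decomposition. First I would record that, since the extension is split with averaging Lie algebra section $s$, we have $\mathfrak{e}_U \cong (\mathfrak{g} \oplus \mathfrak{h})_{P \oplus Q}$ with the semi-direct product bracket $[~,~]_\ltimes$, so that the associated $2$-cocycle $(\chi, \Phi)$ vanishes identically; hence the Wells map $\mathcal{W} : C_\psi \rightarrow H^2_{\ALie}(\mathfrak{g}_P, \mathfrak{h}_Q)$ is the zero map. Consequently the Wells exact sequence for abelian extensions collapses to $\Im(\Pi) = \Ker(\mathcal{W}) = C_\psi$, i.e. $\Pi : \Aut_\mathfrak{h}(\mathfrak{e}_U) \rightarrow C_\psi$ is surjective, and we obtain the short exact sequence of groups
\begin{align*}
    1 \rightarrow \Aut_\mathfrak{h}^{\mathfrak{h}, \mathfrak{g}}(\mathfrak{e}_U) \xrightarrow{\iota} \Aut_\mathfrak{h}(\mathfrak{e}_U) \xrightarrow{\Pi} C_\psi \rightarrow 1.
\end{align*}

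Next I would produce an explicit set-theoretic (in fact group-theoretic) section of $\Pi$. Using the identification $\mathfrak{e} \cong \mathfrak{g} \oplus \mathfrak{h}$, define $\rho : C_\psi \rightarrow \Aut_\mathfrak{h}(\mathfrak{e}_U)$ by $\rho((\beta,\alpha))(x,h) := (\alpha(x), \beta(h))$. I would check that $\rho((\beta,\alpha))$ is indeed an averaging Lie algebra automorphism of $(\mathfrak{g} \oplus \mathfrak{h})_{P \oplus Q}$: compatibility with the bracket $[~,~]_\ltimes$ is exactly the compatibility condition $\beta(\psi_x h) = \psi_{\alpha(x)}\beta(h)$ defining $C_\psi$ together with $\alpha, \beta$ being Lie algebra homomorphisms, and compatibility with $P \oplus Q$ follows from $\alpha P = P \alpha$ and $\beta Q = Q \beta$. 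Since $\beta$ preserves $\mathfrak{h}$, we have $\rho((\beta,\alpha)) \in \Aut_\mathfrak{h}(\mathfrak{e}_U)$. That $\rho$ is a group homomorphism is immediate from the diagonal form, and $(\Pi \circ \rho)((\beta,\alpha)) = (\rho((\beta,\alpha))|_\mathfrak{h}, \, \overline{\rho((\beta,\alpha))}) = (\beta, \alpha)$, the second component because $\overline{\rho((\beta,\alpha))} = p \circ \rho((\beta,\alpha)) \circ s = \alpha$ for the canonical section $s(x) = (x,0)$. Thus $\Pi \circ \rho = \Id_{C_\psi}$, so the sequence splits.

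Finally I would invoke the standard fact that a short exact sequence of groups $1 \rightarrow N \rightarrow G \rightarrow K \rightarrow 1$ which splits via a homomorphism $K \rightarrow G$ exhibits $G$ as the (internal, hence external) semi-direct product $G \cong K \ltimes N$: the normal subgroup $N = \Aut_\mathfrak{h}^{\mathfrak{h}, \mathfrak{g}}(\mathfrak{e}_U) = \Ker(\Pi)$ is normal in $\Aut_\mathfrak{h}(\mathfrak{e}_U)$ by Theorem \ref{w-e-s}, the image $\rho(C_\psi)$ is a complement meeting $N$ trivially, and $N \cdot \rho(C_\psi) = \Aut_\mathfrak{h}(\mathfrak{e}_U)$ by surjectivity of $\Pi$. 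This yields $\Aut_\mathfrak{h}(\mathfrak{e}_U) \cong C_\psi \ltimes \Aut_\mathfrak{h}^{\mathfrak{h}, \mathfrak{g}}(\mathfrak{e}_U)$, as claimed. The only point requiring genuine verification — and hence the main (though modest) obstacle — is checking that $\rho((\beta,\alpha))$ really is an automorphism of the averaging Lie algebra $\mathfrak{e}_U$, i.e. that the defining relation of $C_\psi$ is exactly what is needed for bracket-preservation; everything else is formal group theory and the vanishing of the Wells map established above.
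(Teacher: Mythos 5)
Your proposal is correct and follows essentially the same route as the paper: vanishing of the $2$-cocycle $(\chi,\Phi)$ for a splitting section forces the Wells map to vanish, the Wells sequence collapses to the short exact sequence $1 \to \Aut_\mathfrak{h}^{\mathfrak{h},\mathfrak{g}}(\mathfrak{e}_U) \to \Aut_\mathfrak{h}(\mathfrak{e}_U) \to C_\psi \to 1$, and the diagonal homomorphism $\rho((\beta,\alpha))(x,h)=(\alpha(x),\beta(h))$ splits it, giving the semi-direct product decomposition. Your added verification that $\rho((\beta,\alpha))$ preserves the semi-direct product bracket (via the defining relation of $C_\psi$) and the averaging operator $P\oplus Q$ is a welcome detail the paper leaves implicit.
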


\medskip

\noindent {\bf Acknowledgements.} The first named author would like to thank Indian Institute of Technology (IIT) Kharagpur for providing the beautiful academic atmosphere where his part of the research has been carried out. The second named author acknowledges the Tata Institute of Fundamental Research (TIFR) Mumbai for their postdoctoral fellowship.

\medskip

\noindent {\bf Data Availability Statement.} Data sharing does not apply to this article as no new data were created or analyzed in this study.

\end{document}